\newtheorem{theorem}{Theorem}
\newtheorem{corollary}{Corollary}
\newtheorem{proposition}{Proposition}
\newtheorem{definition}{Definition}
\newtheorem{remark}{Remark}
\numberwithin{equation}{section}
\let\@wraptoccontribs\wraptoccontribs
\begin{document} 

\title[Classic link invariants from ${\rm Y}_{\MakeLowercase{d,n}}(\MakeLowercase{q})$ and ${\rm FTL}_{\MakeLowercase{d,n}}(\MakeLowercase{q})$]{Classical link invariants from the framizations of the Iwahori-Hecke algebra and the Temperley-Lieb algebra of type $A$}

\author{Dimos Goundaroulis}
\address{Department of Mathematics,
National Technical University of Athens,
Zografou campus, GR-157 80 Athens, Greece.}
\email{dgound@mail.ntua.gr}
\urladdr{users.ntua.gr/dgound}

\author{Sofia Lambropoulou}
\address{Department of Mathematics,
National Technical University of Athens,
Zografou campus, GR-157 80 Athens, Greece.}
\email{sofia@math.ntua.gr}
\urladdr{www.math.ntua.gr/~sofia/}

\thanks{The first author has been financed by the ``European Solidarity Travel Grant'' of the European Mathematical Society. The second author has been financed by the European Union (European Social
 Fund - ESF) and Greek national funds through the Operational Program
 "Education and Lifelong Learning" of the National Strategic Reference
 Framework (NSRF) - Research Funding Program: THALES: Reinforcement of the
 interdisciplinary and/or inter-institutional research and innovation.
}

\keywords{Framization, Iwahori-Hecke algebra, Temperley-Lieb algebra, Ocneanu trace, Yokonuma-Hecke algebra, Markov trace, link invariants}

\subjclass[2010]{57M25, 57M27, 20C08, 20F36}

\date{}

\begin{abstract}
In this paper we first present the construction of the new 2-variable classical link invariants arising from the Yokonuma-Hecke algebras ${\rm Y}_{d,n}(q)$, which are not topologically equivalent to the Homflypt polynomial. We then present the algebra ${\rm FTL}_{d,n}(q)$ which is the appropriate Temperley-Lieb analogue of ${\rm Y}_{d,n}(q)$, as well as the related 1-variable classical link invariants, which in turn are not topologically equivalent to the Jones polynomial. Finally, we present the algebra of braids and ties which is related to the Yokonuma-Hecke algebra, and also its quotient, the partition Temperley-Lieb algebra ${\rm PTL}_n(q)$ and we prove an isomorphism of this algebra with a subalgebra of ${\rm FTL}_{d,n}(q)$.
\end {abstract}
\maketitle

\section{Introduction}
In the 80's, the Jones polynomial \cite{jo} and its 2-variable generalization, the Homflypt polynomial \cite{Homfly,pt}, rekindled the interest in the study of mathematical knots by providing strong and easy-to-compute machinery for distinguishing pairs of non-isotopic classical knots and links. The methods of V. F. R. Jones \cite{jo} to construct  both of these polynomial invariants involved, for the first time in the literature, Artin's braid group $B_n$, a representation of $B_n$ onto either the Temperley-Lieb algebra ${\rm TL}_n(q)$ or the Iwahori-Hecke algebra ${\rm H}_n(q)$ respectively, and the use of a unique linear Markov trace defined on these algebras. Thus, the algebras ${\rm H}_n(q)$ and ${\rm TL}_n(q)$ became the first examples of knot algebras. A knot algebra ${\rm A}$ is a triplet $({\rm A}, \pi , \tau ) $, where $\pi$ is an appropriate representation of the braid group in ${\rm A}$ and $\tau$ a Markov trace function defined on ${\rm A}$.

\smallbreak
Aiming at discovering new 3-manifold invariants J. Juyumaya and the second author introduced the concept of {\it framization of knot algebras} \cite{jula2, jula5, jula, jula6}. The framization consists in an extension of  a knot algebra via the  addition of  framing generators which gives rise to a new algebra that is related to framed braids and framed knots.  The basic example of framization is the Yokonuma-Hecke algebra, ${\rm Y}_{d,n}(q)$, which can be regarded as a  framization of the Iwahori-Hecke algebra, ${\rm H}_n(q)$ \cite{jula2, jula}. Juyumaya in \cite{ju} constructed a unique Markov trace function, ${\rm tr}_{d}$, on the algebra ${\rm Y}_{d,n}(q)$ with parameters $z, x_1 , \ldots , x_{d-1}$, hence making ${\rm Y}_{d,n}(q)$ a knot algebra. One of the challenges that appeared along the way of constructing framed and classical link invariants  from the algebras ${\rm Y}_{d,n}(q)$ was the fact that ${\rm tr}_{d}$ does not re-scale according to the framed braid equivalence, making it the only trace known in the literature  with this property \cite{jula}. However, by applying the so-called {\it ${\rm E}$-condition} on the parameters $x_1, \ldots , x_{d-1}$ the trace ${\rm tr}_{d}$ does re-scale. The solutions of the ${\rm E}$-system were determined by P. G\'erardin \cite[Appendix]{jula}.

\smallbreak
 The resulting invariants, in particular those for classical links, was necessary to be compared with other known invariants, especially with the 2-variable Jones or Homflypt polynomial. Indeed, while it was already known \cite{ChLa} that the polynomial invariants in question do not coincide with the Homflypt polynomial except in trivial cases, they could still be topologically equivalent, in the sense that they might distinguish the same pairs of non-isotopic links. This problem remained open for quite some time, until in \cite{ChJuKaLa} it was proven that the classical link invariants $\Theta_d$ from the algebras ${\rm Y}_{d,n}(q)$  coincide with the Homflypt polynomial on {\it knots}, but they are {\it not topologically equivalent to the Homflypt polynomial on links}. The proof emerged from the discovery of a {\it special skein relation} for the invariants $\Theta_d$ {\it involving only crossings between different components} of the link. This fact is very important, since there are very few link invariants defined through skein relations. The intrinsic reason behind this discovery was the quadratic relation that was used in \cite{ChJuKaLa}. In all previous works regarding the Yokonuma-Hecke algebras \cite{ju, jula2, jula3, jula4, jula, jula5, jula6}, another presentation was used with a parameter $u$ in a different quadratic relation, where $u=q^2$. The new quadratic relation revealed the special skein relation and also simplified computations significantly in the software that was used in the comparison of the invariants \cite{ka}. In \cite{ChJuKaLa}, 6 pairs of Homflypt-equivalent links are presented, which are distinguished by the invariants $\Theta_d$. Regarding properties, these invariants behave similarly to the Homflypt polynomial under reversing orientation, split links, connected sums and mirror imaging \cite{ChmJaKaLa, ChJuKaLa}.

\smallbreak
The next natural question  was the determination of the framization of the Temperley-Lieb algebra. In \cite{go, gojukola, gojukola2} potential candidate quotients of the Yokonuma-Hecke algebra were introduced and studied extensively. There were three possible quotients of the algebra ${\rm Y}_{d,n}(u)$ that could lead to a framization of the Temperley-Lieb algebra, the {\it Yokonuma-Temperley-Lieb algebra} ${\rm YTL}_{d,n}(u)$, the {\it Complex Reflection Temperley-Lieb algebra} ${\rm CTL}_{d,n}(u)$ and the {\it Framization of the Temperley-Lieb algebra} ${\rm FTL}_{d,n}(u)$. In ${\rm YTL}_{d,n}(u)$ the defining ideal is generated by an element analogous to the Steinberg element of the classical Temperley-Lieb algebra. In ${\rm FTL}_{d,n}(u)$ framing is introduced intrinsically in the defining element of the ideal. Finally, in ${\rm CTL}_{d,n}(u)$ framing in the defining element is less restricting than in ${\rm FTL}_{d,n}(u)$. Next, the necessary and sufficient conditions so that the Juyumaya trace ${\rm tr}_{d}$ passes through to each one of the three quotient algebras had to be determined in order that they qualify as knot algebras (together with the natural representation of the framed braid group onto each one of them). The corresponding conditions for the algebra ${\rm YTL}_{d,n}(u)$ are given in \cite{go, gojukola}, however they are too restrictive, and as a result, the related classical link invariants just recover the Jones polynomial. For this reason the algebra ${\rm YTL}_{d,n}(u)$ was discarded as a potential candidate for the framization of the Temperley-Lieb algebra. The conditions for the algebras ${\rm CTL}_{d,n}(u)$ and ${\rm FTL}_{d,n}(u)$ were explored in \cite{go, gojukola2}. For the case of ${\rm CTL}_{d,n}(u)$, contrary to the case of ${\rm YTL}_{d,n}(u)$, the conditions are too relaxed, leading to the necessity of imposing the ${\rm E}$-condition on the trace parameters $x_1, \ldots ,x_{n-1}$  in order to obtain link invariants. Even by doing so, the resulting invariants coincide  with those that are derived either from ${\rm Y}_{d,n}(u)$ or from ${\rm FTL}_{d,n}(u)$ \cite{gojukola2}. Consequently, as was discussed in \cite{go, gojukola2}, the most natural candidate from the topological point of view was to choose the algebra ${\rm FTL}_{d,n}(u)$ as the framization of the Temperley-Lieb algebra, since the necessary and sufficient conditions for the passing of the trace include the ${\rm E}$-condition. Focusing now on the classical link invariants from the algebra ${\rm FTL}_{d,n}(u)$, following the methods of \cite{ChJuKaLa}, these needed to be compared to the Jones polynomial. To achieve this, we considered in \cite{gojukola2} a new presentation for the algebra with parameter $q$ instead of $u$ that had to be adopted from the algebra ${\rm Y}_{d,n}(q)$. Denoting now the related classical link invariants by $\theta_d$ and  adjusting the results of \cite{ChJuKaLa} to the invariants $\theta_d(q)$, it was shown \cite{gojukola2} that they coincide with the Jones polynomial on {\it knots} but they are {\it not topologically equivalent to the Jones polynomial on links}.

\smallbreak
Framizations of other knot algebras have been also proposed. For example, the framization of the BMW algebra is introduced  in \cite{jula5}, while \cite{ChdA, flojula} discuss the framization of Hecke-type algebras of type $B$.

\smallbreak
Returning now to the invariants from the algebra ${\rm Y}_{d,n}(q)$, it was shown in \cite{ChJuKaLa} that the family of invariants $\{ \Theta_d \}$ can generalized to a 3-variable invariant $\Theta(q,\lambda, E)$. The invariant $\Theta$ can be completely defined using just the special skein relation of $\Theta_d$ and its values on disjoint unions of knots \cite{ChJuKaLa, kaula}. Further, the invariant $\Theta$ is related to the {\it algebra of braids and ties}, $\mathcal{E}_n(u)$, that was introduced by F. Aicardi and J. Juyumaya in \cite{AiJu}. In this paper we will use a different presentation for the algebra of braids and ties, with parameter $q$ instead of $u$, that was first given in \cite{ChJuKaLa}. We will use this presentation to define a Temperley-Lieb type quotient of the algebra $\mathcal{E}_n(q)$, the {\it partition Temperley-Lieb algebra}, ${\rm PTL}_n(q)$, which was originally defined by J. Juyumaya in \cite{juptl} as a quotient of the algebra $\mathcal{E}_n(u)$ and we show that for $d\geq n$ it is isomorphic to the subalgebra of ${\rm FTL}_{d,n}(q)$ that is generated only by the braiding generators $g_i$. In a similar way to the invariant $\Theta (q,\lambda , E)$, the algebra ${\rm PTL}_n(q)$ is related to a 2-variable generalization of the classical link invariants $\theta_d$ from the algebras ${\rm FTL}_{d,n}(q)$ \cite{gola}.

In this paper we give a survey of the results in \cite{ChJuKaLa, gojukola, gojukola2}. Furthermore, we point out the connection of the partition Temperley-Lieb algebra with a certain subalgebra of the Framization of the Temperley-Lieb algebra, which is a new result. 

\smallbreak
The outline of the paper is as follows:
Section~\ref{prelim} is dedicated to providing necessary definitions and results, including: the Iwahori-Hecke algebra, the Ocneanu trace, the Homflypt polynomial, the Temperley-Lieb algebra and the Jones polynomial. In Section~\ref{frhecke} we recall some basic facts for the framed braid group and we give the definition of the Yokonuma-Hecke algebra. In Section~\ref{sectinv}, using  tools from harmonic analysis on finite groups, such as the convolution product, the product by coordinates and the Fourier transform, we give a proof of the solutions of the {\it ${\rm E}$-system} using the notation introduced in this paper. Then, we describe the construction of the invariants $\Phi_{d}$ for framed links and $\Theta_{d}$ for classical links. In Section~\ref{yhinvsec} we demonstrate the methods of \cite{ChJuKaLa} for comparing the invariants $\Theta_{d}$ to the Homflypt polynomial by employing the {\it specialized trace} ${\rm tr}_{d,D}$. In Section~\ref{frtl}, we discuss the results of \cite{gojukola2} regarding the 1-variable invariants for classical links derived from the Framization of the Temperley-Lieb algebra ${\rm FTL}_{d,n}(q)$ and how they are proven to be {\it not topologically equivalent to the Jones polynomial for the case of links.} Finally, in Section~\ref{furth} we give the connection to the partition Temperley-Lieb algebra ${\rm PTL}_{d,n}(q)$ by proving the existence for $d \geq n$ of an isomorphism between ${\rm PTL}_{d,n}(q)$ and the subalgebra of ${\rm FTL}_{d,n}(q)$ that is generated only by the braiding generators $g_i$.

\smallbreak
This paper is an extended version, including new results, of the talk with title ``On the link invariants associated to the framization of knot algebras'' that was given by the first author at the Special Session 35, {\it "Low Dimensional Topology and Its Relationships with Physics", } as part of  the 1st International AMS/EMS/SPM Meeting held at Porto June 10-13 2015.

\section{Preliminaries}\label{prelim}
\subsection{{\it Notations}}Throughout the paper by the term algebra we mean an associative unital algebra over the field $\mathbb{C}(q)$, where $q$ is an indeterminate. Two positive integers, $d$ and $n$, are also fixed. 

We now introduce the groups that will be used in the paper. We denote by $S_n$ the symmetric group on $n$ symbols and by $s_i$ the elementary transposition $(i,i+1)$. 

Denote now by $C = \langle t \rangle$ the infinite cyclic group and by $C_d= \langle t \, | \, t^d=1 \rangle$  the cyclic group of order $d$. Let $t_i := (1,\ldots, 1,t ,1, \ldots , 1 )$,
where $t$ is in the  $i$-th position. We then define:
\[
C^n := \langle t_1, \ldots  ,t_n \, | \, t_i t_j = t_j t_i, \quad \forall \, i, \, j \rangle \qquad \mbox{and} \qquad
C_d^n:= C^n / \langle t_i^d -1  \rangle .
\]

We further define the group $C_{d,n}: = C_d^n \rtimes S_n $, where the action is defined by permutation on the indices of the $t_i$'s, namely: $s_it_j = t_{s_i(j)} s_i$.  Notice that $C_{d,n}$ is isomorphic to the {\it complex reflection group} $G(d,1,n)$. Finally, the {\it braid group of type} $A$, denoted by $B_n$, is the group generated by the elementary braidings $\sigma_1, \ldots , \sigma_{n-1}$, subject to the braid relations: $\sigma_i \sigma_j \sigma_i = \sigma_j \sigma_i \sigma_j$, for $|i-j|=1$ and $\sigma_i \sigma_j = \sigma_j \sigma_i$, for $|i-j|>1$.

\subsection{{\it The Iwahori-Hecke algebra of type $A$ and the Homflypt polynomial}}\label{homflysection}
The Iwahori-Hecke algebra of type $A$, ${\rm H}_n(q)$, is  the  $\mathbb{C}(q)$-algebra that is generated by the elements $h_1, \ldots , h_{n-1}$ that are subject to the following relations:
 \begin{eqnarray}
h_i h_j &=& h_j h_i \quad \text{for all} \quad |i-j| >1 \label{He1}\\
h_i h_j h_i &=& h_j h_i h_j \quad \text{for all} \quad \vert i - j \vert =1 \label{He2} \\
h_i^2 &=&   1 + (q - q^{-1}) h_i \label{He3}.
\end{eqnarray}
The first two relations in the presentation of ${\rm H}_n(q)$ are exactly the braid relations. Thus, there exists a natural epimorphism $ \pi: \mathbb{C}(q)B_n \to {\rm H}_n(q)$, that sends $\sigma_i \mapsto h_i$ and, so, ${\rm H}_n(q)$ can be also viewed as the quotient of $\mathbb{C}(q)B_n$ over the two-sided ideal generated by the quadratic relations \eqref{He3}. Relations \eqref{He3} imply that the generators $h_i$ are invertible. Indeed:
\begin{equation}\label{hinv}
h_i^{-1} = (q^{-1} -q ) + h_i .
\end{equation}

\begin{remark}\label{heckrem} \rm
Alternatively, the algebra ${\rm H}_n(q)$ can be seen as a $q$-deformation of the group algebra $\mathbb{C}S_n$, namely as  the  $\mathbb{C}(q)$-algebra that is generated by the elements  $h_w$, where  $w\in S_n$ and the following rules of multiplication:
\[
h_{s_i}h_w =\left\{\begin{array}{ll}
h_{s_iw} & \text{for } l(s_iw)>l(w) \\
  h_{s_iw} + (q-q^{-1})h_w & \text{for } l(s_iw)<l(w),
\end{array}\right.
\]
where $l$ denotes the length function on $S_n$ with respect to the generators $s_i$. Setting now $h_i := h_{s_i} $, one obtains the presentation that is described by \eqref{He1}-\eqref{He3}.
\end{remark}

One of the most important properties of the Iwahori-Hecke algebra, is that it supports a unique Markov trace function which was first proved by Ocneanu  \cite{jo, Homfly}. Namely, for any indeterminate $z$ there exists a linear trace ${\rm \tau}$ on $\cup_{n=1}^{\infty} {\rm H}_n(q)$ uniquely defined by the inductive rules:
\[
\begin{array}{lllll}
(1)& {\rm \tau} (\mathbf{1}_{n+1})&= 1, &\mbox{for all } n&\\
(2)& {\rm \tau} (a b) &= {\rm \tau} (ba), & a,b \in {\rm H}_n(q) &(\mbox{Conjugation property})\\
(3)&{\rm \tau} (a h_n ) &= z \, {\rm \tau}(a), & a\in {\rm H}_n(q) &(\mbox{Markov property}),
\end{array}
\]
where $\mathbf{1}_{n+1}$ denotes the unit in ${\rm H}_{n+1}(q)$. By using the natural epimorphism $\pi$ and by abusing notation, one can define $\tau$ on the elements of $B_n$. From the topological point of view, closing a braid $\alpha$, that is, connecting corresponding end points in pairs, gives rise to an oriented link. The closed braid is denoted by $\widehat{\alpha}$ and is called {\it the closure of the braid} $\alpha$. For the converse, by Alexander's theorem \cite{alex}, any oriented link is isotopic to the closure of a braid. Further, by the well-known Markov theorem \cite{markov}, isotopy classes of oriented links are in bijection with equivalence classes of braids in $\cup_{\infty}B_n$. The equivalence relation is generated by the two following Markov moves:
\begin{enumerate}[i.]
\item Conjugation: $\alpha \beta \sim \beta \alpha$, $\alpha$, $\beta \in B_n$.
\item Stabilization move (positive and negative): $\alpha \sim \alpha \sigma_n^{\pm1}$, $\alpha \in B_n$.
\end{enumerate}

V. F. R. Jones used the Markov theorem and the Ocneanu trace for defining an invariant for knots and links in the context of braids by arguing that the trace $\tau$ must satisfy both Markov moves \cite{jo}. By the second rule of  the trace $\tau$, we deduce that $\tau$ already satisfies the conjugation move. However, the third defining rule of $\tau$ ensures only positive stabilization. Therefore, $\tau$ must be first  re-scaled so that the generators $h_i$ and $h_i^{-1}$ yield the same trace value. Let $\lambda_{\rm H} \in \mathbb{C}(q)$ such that:
\begin{equation}\label{lamdh}
\tau (\sqrt{\lambda_{\rm H}}h_i) = \tau \left ( ( \sqrt{\lambda_{\rm H}} h_i)^{-1} \right).
\end{equation}
Using equation~\eqref{hinv} one finds:
\begin{equation}\label{theta}
\lambda_{\rm H} = \frac{ \tau \left( h_i^{-1} \right)}{\tau(h_i)} = \frac{z + q^{-1} - q}{z}.
\end{equation}
Equation~\eqref{lamdh}  ensures that $\tau (w h_n) = \tau (w h_n^{-1} )$ for any $w \in {\rm H}_n(q)$. This is in accordance with the fact that the links $\widehat{\alpha \sigma_n}$ and $\widehat{\alpha \sigma_n^{-1}}$ are isotopic.
Furthermore, since the links $\widehat{\alpha}$ and $\widehat{\alpha \sigma_n}$ are isotopic, the trace $\tau$ must be normalized so that the link invariant takes the same value on them. This can be achieved by introducing the {\it normalization factor}  $  \frac{1 -\lambda_{\rm H} }{\sqrt{\lambda_{\rm H}}(q-q^{-1})} $. We can now proceed with the definition of the {\it 2-variable Jones or Homflypt polynomial} \cite{jo}:
\begin{definition}\label{xinvdef} \rm
The two-variable invariant $P(q,\lambda_{\rm H})$ of the oriented link $\widehat{\alpha}$ is the function:
\begin{equation}\label{xinv}
P(q,\lambda_{\rm H})(\widehat{\alpha}) = \left ( \frac{1 -\lambda_{\rm H} }{\sqrt{\lambda_{\rm H}}(q-q^{-1})} \right )^{n-1} \left ( \sqrt{\lambda_{\rm H}} \right )^{\epsilon(\alpha)} \tau ( \pi ( \alpha )) ,
\end{equation}
where $\alpha \in B_n$, $\epsilon(\alpha)$ is the algebraic sum of the exponents of the $\sigma_i$'s in $\alpha$ and $ \pi$ is the natural epimorphism from $\mathbb{C}(q)B_n$ to  ${\rm H}_n(q)$.
\end{definition} 
Moreover, the polynomial $P(q,\lambda_{\rm H})$ satisfies the following skein relation:
\[
\frac{1}{\sqrt{\lambda_{\rm H}}} \, P({L_{+}}) - \, \sqrt{\lambda_{\rm H}} \,  P({L_{-}}) =  \left( q - q^{-1} \right )P({L_0}),
\]
where $L_{+}, L_{-}$ and $L_0$ constitute a Conway triple \cite{jo}.

\subsection{\it The Temperley-Lieb algebra and the Jones polynomial} For $n \geq 3$, the classical {\it Temperley-Lieb algebra} ${\rm TL}_n(q)$ can be defined  as the quotient of the algebra ${\rm H}_n(q)$ over the two-sided ideal generated by the {\it Steinberg  elements}:
\begin{equation}\label{idealrel}
h_{i,j}: = 1 + q(h_i + h_j) + q^2 (h_i h_j + h_j h_i) + q^3 h_i h_j h_i,\quad \text{for all} \quad \vert i - j \vert =1 .
\end{equation}
The defining ideal of the algebra ${\rm TL}_n(q)$ is principal and it is generated by the element $h_{1,2}$ \cite[Corollary 2.11.2]{gohajo}. Furthermore, using the transformation:
\begin{equation}\label{transfor}
f_i := \frac{1}{q^2+1}(qh_i + 1) ,
\end{equation}
the algebra ${\rm TL}_n(q)$ can be presented by the non-invertible generators $ f_1, \ldots , f_{n-1} $ subject to the following relations:
\[
\begin{array}{ccll}
f_{i}^{2} & = &  f_i & \\
f_if_{j}f_i &  = &  \delta f_i & \text{for all} \quad |i-j| =1\\
f_if_j &  = & f_j f_i & \text{for all} \quad |i-j| >1 ,
\end{array}
\]
where $ \delta^{-1} = 2+ q^2 +q^{-2}$ \cite{jo}.

The Ocneanu trace $\tau$ passes through to the quotient algebra ${\rm TL}_n(q)$ for specific values of $z$. Indeed, as V. F. R. Jones showed in \cite{jo}, to factorize $\tau$ to the Temperley-Lieb algebra, one only needs the requirement that $\tau$ annihilates the expressions \eqref{idealrel}. So, it is proved in \cite{jo} that  
$\tau$ factors through the algebra ${\rm TL}_n(q)$ if and only if the trace indeterminate $z$ takes the values:
\begin{equation}\label{jonval}
z = - \frac{q^{-1}}{q^2+1} \quad \mbox{or} \quad z=-q^{-1}.
\end{equation}

By specializing $z$ to $-\frac{q^{-1}}{q^2+1}$, which is the only topologically non-trivial value for $z$, one obtains the Jones polynomial, $V(q)$, through the Homflypt polynomial, as follows  \cite{jo}:
\begin{equation}\label{vpol}
V(q)(\widehat{\alpha}) = \left(- \frac{1+q^2}{q} \right)^{n-1} q^{2\,\varepsilon(\alpha)} {\rm \tau}(\pi(\alpha)) = P(q,q^4)(\widehat{\alpha}),
\end{equation}
where $\alpha \in B_n$ and $\epsilon(\alpha)$, $\pi$ are as in \eqref{xinv}.
\section{The framization of the Iwahori-Hecke algebra of type $A$}\label{frhecke}
In this section we give the definition of the Yokonuma-Hecke algebras, as quotients of the framed braid groups, and we also recall a Markov trace defined on them.
\subsection{{\it The $d$-modular framed braid group}}
The {\it framed braid group} on $n$ strands is defined as:
\begin{equation*}
{\mathcal F}_{n} = C^n \rtimes  B_n ,
\end{equation*}
where the action of $B_n$ on $C^n$ is given by the permutation induced by a braid on the indices:
$
\sigma_it_j=t_{s_i(j)}\sigma_i.
$
The generators $t_i$ of $C^n$ are called the {\it framing generators}, since $t_i$ means framing 1 on the $i$-th strand of a braid. Due to the above action a word $w$ in ${\mathcal F}_{n}$ has the {\it splitting property}, that is, it splits into the  {\it framing} part and the {\it braiding}  part:
$
w = t_1^{a_1}\ldots t_n^{a_n} \, \sigma,
$
where $\sigma \in B_n$ and $a_i \in \mathbb{Z}$, and thus $w$ is a classical braid with an integer attached to each
strand. Topologically, an element of $C^n$ is identified with a framed identity braid on $n$ strands, while a classical braid in $B_n$ is viewed as a framed braid with all framings zero. The multiplication in ${\mathcal F}_n$ is defined by placing one framed braid on top of the other and collecting the total framing of each strand to the top.

For the fixed positive integer $d$, the {\it $d$-modular framed braid
group} on $n$ strands, ${\mathcal F}_{d,n}$, is defined as the
quotient of ${\mathcal F}_n$ over the {\it modular relations}
$
t_i^d= 1$, for $ i =1, \ldots, n
$
and thus, ${\mathcal F}_{d,n}= C_d^n \rtimes  B_n$. Framed braids in ${\mathcal F}_{d,n}$  have framings modulo $d$. 

\subsection{{\it The Yokonuma-Hecke algebra of type A}}
The {\it Yokonuma-Hecke algebra} ${\rm Y}_{d,n}(q)$ \cite{yo} is defined as the quotient of the group algebra
$\mathbb{C}(q) {\mathcal F}_{d,n}$ over the two-sided ideal  generated by the elements:
\[
\sigma_i^2 - 1 -  (q-q^{-1}) \, e_i \, \sigma_i, \quad \mbox{for all } i,
\]
where   $e_{i}$ is (the idempotent \cite{ju}) defined by $e_{i} := \frac{1}{d} \sum_{s=0}^{d-1}t_i^s t_{i+1}^{d-s},
$ where $i=1,\ldots , n-1$ (see \cite{jula} for a diagrammatic interpretation). The generators of the ideal give rise to the following quadratic relations in ${\rm Y}_{d,n}(q)$: 
\begin{equation}\label{sigmaquadr}
g_i^2 = 1 + (q-q^{-1}) \, e_i \, g_i,
\end{equation}
where $g_i$ corresponds to $\sigma_i$. Since the quadratic relations do not change the framings we have that $\mathbb{C}C_d^n\subset \mathbb{C}(q) C_d^n \subset  {\rm Y}_{d,n}(q)$ and we keep the same notation for the elements of $\mathbb{C}C_d^n$ and for the elements $e_i$  in $ {\rm Y}_{d,n}(q)$. From the above, the algebra ${\rm Y}_{d,n}(q)$ has a presentation with generators $ t_1, \ldots , t_n , g_1,$ $ \ldots, g_{n-1}$, subject to the following relations:
\begin{align}
g_i g_j &= g_j g_i \quad | i - j| >1 \label{y1} \\
 g_i g_j g_i& = g_j g_i g_j \quad |i-j|=1 \label{y2}\\
 t_i^d&=1 \label{y3}\\
 t_i t_j &= t_j t_i \label{y4}\\
 g_i t_j& = t_{s_i(j)} g_i \label{y5}\\
 g_i^2 = 1+ (&q - q^{-1})\,e_i \, g_i \label{yqeq}.
 \end{align}
 The $t_i$'s are the {\it framing generators}, while the $g_i$'s are the {\it braiding generators} of ${\rm Y}_{d,n}(q)$. Note that all generators are invertible. In particular:
 \begin{equation}\label{ginv}
 g_i^{-1} = g_i - (q-q^{-1}) e_i.
 \end{equation}
 
By its construction, the Yokonuma-Hecke algebra of type $A$ is considered as {\it the framization of the Iwahori-Hecke algebra of type} A.

\begin{remark} \label{yhalt} \rm
In analogy to Remark~\ref{heckrem}, the algebra ${\rm Y}_{d,n}(q)$ can be alternatively defined as a $q$-deformation of the algebra $\mathbb{C}C_{d,n}$. Namely, it is the $\mathbb{C}(q)$-algebra that is generated by the elements $g_{s_i}$ and $g_{t_i}$, where  $s_i\in S_n$ and $t_i \in \left ( \mathbb{Z} /d \mathbb{Z} \right )^n$, and the following rules of multiplication: 
\[
g_{s_i}g_w =\left\{\begin{array}{ll}
g_{s_iw} & \text{for } l'(s_iw)>l'(w) \\
 g_{s_iw} + (q-q^{-1})e_ig_w & \text{for } l'(s_iw)<l'(w) ,
\end{array}\right.
\]
where $w \in S_n$ and $l^\prime$ is the length function of $C_{d,n}$. We further define: $g_{t_iw} = g_{t_i}g_w$. Setting now $g_i := g_{s_i}$ and $t_i:= g_{t_i}$ one recovers the presentation described by equations \eqref{y1}-\eqref{yqeq}. The reader may refer to \cite[Proposition~2.14]{jusur} and  \cite{ju, gojukola2} for more details. 
\end{remark}

 \begin{remark}\label{oldquadr} \rm
 In \cite{ju, jula, jula2, jula3, jula4, jula5} a different presentation for the Yokonuma-Hecka algebra, was used, with parameter $u$. More precisely,  the algebra ${\rm Y}_{d,n}(u)$ is generated by the elements $\widetilde{g}_1 , \ldots , \widetilde{g}_{n-1} , t_1 , \ldots , t_n, $ satistfying the relations (\ref{y1})-(\ref{y5})  and the quadratic relations:

\begin{equation}\label{newquad}
(\widetilde{g}_i)^{\,2} = 1+ (u-1) e_i + (u-1) e_i  \, \widetilde{g}_i.
\end{equation}

One can obtain the presentation given above for ${\rm Y}_{d,n}(q)$ from this one by taking $u=q^2$ and 
\begin{equation}\label{switch}
 \widetilde{g}_i=g_i+(q^{-1}-1)e_i \, g_i  \quad \mbox{or, equivalently, }\,g_i= \widetilde{g}_i+(q-1)e_i \, \widetilde{g}_i. 
\end{equation} \end{remark}

\begin{remark} \rm
For $d=1$,  the algebra ${\rm Y}_{1,n}(q)$ coincides with the algebra ${\rm H}_n(q)$. Namely, for $d=1$ we have that $t_j=1$, for $1\leq j \leq n$, which also implies that $e_i=1$, $1\leq i \leq n-1$. Then the quadratic relation \eqref{yqeq} becomes: $g_i^2 = 1+ (q-q^{-1}) g_i$, which is the quadratic relation of the algebra ${\rm H}_n(q)$. On the braid level,  the group $\mathcal{F}_{1,n}$ coincides with the group $B_n$ of classical braids, since for $d=1$ all framings are zero. 
\end{remark}

The idempotents $e_i$ can be generalized  to the following elements in ${\rm Y}_{d,n}(q)$ for any indices $i,j$:
\[
e_{i,j} := \frac{1}{d} \sum_{s=0}^{d-1}t_i^s t_{j}^{d-s}.
\]
We also define, for any $0\leq m \leq d-1$, {\it the shift of $e_i$ by $m$}:
\[
e_i^{(m)} := \frac{1}{d} \sum_{s=0}^{d-1} t_i^{m+s} t_{i+1}^{d-s}.
\]
Notice that $e_i = e_{i, i+1} = e_i^{(0)}$. Notice also that $e_i^{(m)} = t_i^m e_i = t_{i+1}^m e_i$. Then  one deduces easily that: $e_i^{(m)}e_{i+1}  =   e_ie_{i+1}^{(m)} \quad \mbox{for all } \, 0\leq m \leq d-1$.

In \cite{ju} it is shown that the algebra ${\rm Y}_{d,n+1}(q)$ has the following inductive linear basis:
\begin{equation}\label{yhbasel}
\mathfrak{m}_{n}g_n g_{n-1}\ldots g_i t_i^k \quad \mbox{or} \quad  \mathfrak{m}_{n}t_{n+1}^k ,
\end{equation}
where $0\leq k \leq d-1$ and $\mathfrak{m}_n$ is  a word in the inductive basis of  ${\rm Y}_{d,n}(q)$. By employing this basis Juyumaya has proven that ${\rm Y}_{d,n}(q)$ supports a unique Markov trace function for any $n$:
\begin{theorem}[{\cite[Theorem 12]{ju}}]\label{Juyutrace}
For indeterminates $z$, $x_1$, $\ldots, x_{d-1}$ there exists a unique linear Markov trace:
\[
 {\rm tr}_d:  \cup_{n=1}^{\infty}{\rm Y}_{d,n}(q) \longrightarrow    \mathbb{C}(q)[z, x_1, \ldots, x_{d-1}],
\]
 defined inductively on $n$ by the following rules:
\[
\begin{array}{rcll}
{\rm tr}_d(ab) & = & {\rm tr}_d(ba)  \qquad &  \\
{\rm tr}_d(\mathbf{1}_{n+1}) & = & 1 & \\
{\rm tr}_d(ag_n) & = & z\, {\rm tr}_d(a) \qquad &  (\text{Markov  property} )\\
{\rm tr}_d(at_{n+1}^s) & = & x_s {\rm tr}_d(a)\qquad  & (  s = 1, \ldots , d-1) ,
\end{array}
\]
where $\mathbf{1}_{n+1}$ denotes the unit in ${\rm Y}_{d,{n+1}}(q)$ and $a,b \in {\rm Y}_{d,n}(q)$. In analogy to the classical case, by considering the natural epimorphism $\gamma:\mathbb{C}(q)\mathcal{F}_n \longrightarrow {\rm Y}_{d,n}(q)$ that sends $\sigma_i \mapsto g_i$ and $t_i \mapsto t_i$, and by abusing notation, one can define ${\rm tr}_d$ on the elements of $\mathcal{F}_n$.
\end{theorem}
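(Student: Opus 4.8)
I would prove this by induction on $n$. The base case $n=1$ is immediate: ${\rm Y}_{d,1}(q)=\mathbb{C}(q)C_d$ is commutative and the prescription ${\rm tr}_d(t_1^s)=x_s$ (with the convention $x_0:=1$) both is forced by the rules and manifestly defines a trace. For the inductive step, \emph{uniqueness} is settled first: by linearity the value of ${\rm tr}_d$ on ${\rm Y}_{d,n+1}(q)$ is determined by its values on the inductive basis \eqref{yhbasel}; on $\mathfrak{m}_nt_{n+1}^k$ the last rule forces $x_k\,{\rm tr}_d(\mathfrak{m}_n)$, while on $\mathfrak{m}_ng_ng_{n-1}\cdots g_it_i^k$, writing $w:=g_{n-1}\cdots g_it_i^k\in{\rm Y}_{d,n}(q)$ and using conjugation to bring $w$ in front followed by the Markov rule, one gets ${\rm tr}_d(\mathfrak{m}_ng_nw)={\rm tr}_d(w\mathfrak{m}_ng_n)=z\,{\rm tr}_d(w\mathfrak{m}_n)$ — in each case a quantity in ${\rm Y}_{d,n}(q)$ known by the inductive hypothesis.

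For \emph{existence}, I would take these forced formulas as the \emph{definition}: declare ${\rm tr}_d$ on ${\rm Y}_{d,n+1}(q)$ to be the linear map with ${\rm tr}_d(\mathfrak{m}_nt_{n+1}^k):=x_k\,{\rm tr}_d(\mathfrak{m}_n)$ and ${\rm tr}_d(\mathfrak{m}_ng_ng_{n-1}\cdots g_it_i^k):=z\,{\rm tr}_d(g_{n-1}\cdots g_it_i^k\,\mathfrak{m}_n)$, where ${\rm tr}_d$ on ${\rm Y}_{d,n}(q)$ is the map provided by the induction, and then check the four rules. The normalization is clear, and the Markov and framing rules for $a\in{\rm Y}_{d,n}(q)$ are immediate because for a basis element $a=\mathfrak{m}_n$ the products $\mathfrak{m}_ng_n$ (the case $i=n$, $k=0$ of \eqref{yhbasel}) and $\mathfrak{m}_nt_{n+1}^s$ are themselves basis elements, to which the definition directly applies. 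So the entire burden falls on the conjugation property.

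The conjugation property ${\rm tr}_d(ab)={\rm tr}_d(ba)$ on ${\rm Y}_{d,n+1}(q)$ is the step I expect to be the main obstacle. By bilinearity it is enough to treat $b$ a generator and $a$ a basis element of \eqref{yhbasel}. When $b$ lies in ${\rm Y}_{d,n}(q)$, i.e.\ $b\in\{g_1,\dots,g_{n-1},t_1,\dots,t_n\}$, one pushes the trailing $g_n$ (resp.\ $t_{n+1}^k$) of $a$ past $b$ by means of \eqref{y2}, \eqref{y5} and the quadratic relation \eqref{yqeq}; since \eqref{yqeq} reintroduces the idempotents $e_i=\tfrac1d\sum_st_i^st_{i+1}^{d-s}$, whose traces have to be recovered by unwinding them into framings and applying the $x_s$-rule, this has to be run as a secondary induction on $n$ and on the number of braiding letters of $a$, at the end of which the identity reduces to the conjugation property already known for ${\rm Y}_{d,n}(q)$. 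The two remaining cases $b=g_n$ and $b=t_{n+1}$ are then done directly, expanding $a$ in \eqref{yhbasel} and iterating the Markov and framing rules together with the inductive trace property.

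A cleaner way to organize precisely this content — the one I would actually follow — is via relative traces: construct linear maps $E_m\colon{\rm Y}_{d,m+1}(q)\to{\rm Y}_{d,m}(q)$ defined on \eqref{yhbasel} by $E_m(\mathfrak{m}_mt_{m+1}^k):=x_k\,\mathfrak{m}_m$ and $E_m(\mathfrak{m}_mg_mw):=z\,\mathfrak{m}_mw$ for $w\in{\rm Y}_{d,m}(q)$, prove that each $E_m$ descends to the algebra ${\rm Y}_{d,m+1}(q)$, fixes ${\rm Y}_{d,m}(q)$ pointwise and is a ${\rm Y}_{d,m}(q)$-bimodule map, and then set ${\rm tr}_d:={\rm tr}_d^{(1)}\circ E_1\circ\cdots\circ E_n$ on ${\rm Y}_{d,n+1}(q)$, where ${\rm tr}_d^{(1)}(t_1^s)=x_s$. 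With this, the trace property is formal from the conditional-expectation properties of the $E_m$ by the standard tower-of-algebras argument, and all the difficulty is concentrated in a single well-definedness check, namely that one $E_m$ respects the defining relations \eqref{y1}--\eqref{yqeq} of ${\rm Y}_{d,m+1}(q)$. Finally, the extension of ${\rm tr}_d$ to $\mathcal{F}_n$ through the epimorphism $\gamma$ is automatic.
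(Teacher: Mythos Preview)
The paper does not give its own proof of this theorem; it cites \cite[Theorem~12]{ju} and only indicates the method by remarking, just before the statement, that the result is obtained ``by employing'' the inductive linear basis \eqref{yhbasel}. Your proposal follows exactly this route: use \eqref{yhbasel} to force uniqueness, define ${\rm tr}_d$ on basis elements by the forced formulas, and then verify the conjugation property by a case analysis on the generators. This is the standard argument and matches what the paper points to; your alternative packaging via relative traces $E_m$ is an equivalent and well-known reorganisation of the same computation.
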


\begin{remark} \rm
Using the trace rules of Theorem~\ref{Juyutrace} and setting  $x_0:=1$, we deduce that ${\rm tr}_d(e_i)$ takes the same value for all $i$, and this value is denoted by $E$:
\[
E := {\rm tr}_d(e_i)= \frac{1}{d}\sum_{s=0}^{d-1}x_{s}x_{d-s}.
\]
Moreover, we also define {\it the shift by $m$ of $E$}, where $0\leq m \leq d-1$, by:
\[
E^{(m)} :={\rm tr}_d(e_i^{(m)})= \frac{1}{d}\sum_{s=0}^{d-1}x_{m+s}x_{d-s} .
\]
Notice that  $E^{(0)} = E$.
\end{remark}

\section{Framed and classical link invariants from ${\rm Y}_{d,n}(q)$}\label{sectinv}
 In complete analogy to the classical case, isotopy classes of oriented framed links are in bijection with equivalence classes of framed braids. This equivalence is generated by the Markov moves, adjusted to the case of framed braids \cite{ks}, namely:
\begin{enumerate}[i.]
\item Conjugation: $\alpha \beta \sim \beta \alpha$, $\alpha$, $\beta \in \mathcal{F}_n$.
\item Stabilization move (positive and negative): $\alpha \sim \alpha \sigma_n^{\pm1}$, $\alpha \in \mathcal{F}_n$.
\end{enumerate}

 \subsection{{\it The ${\rm E}$-system}}\label{invdefsec} The aim now would be to re-scale the trace ${\rm tr}_d$ so that: ${\rm tr}_d(a g_n) = {\rm tr}_d(a g_{n}^{-1})$, $a \, \in \, {\rm Y}_{d,n}(q)$. This way we ensure that ${\rm tr}_d$ satisfies the positive and negative stabilization moves. Moreover, we have to normalize the trace ${\rm tr}_d$ so that links $\widehat{\alpha}$ and $\widehat{\alpha \sigma_n}$, where $\alpha \in \mathcal{F}_{n}$, get the same value of the invariant derived from ${\rm tr}_{d}$. Unfortunately, contrary to the case of $\tau$, the trace ${\rm tr}_d$ does not re-scale directly. As explained in \cite{jula}, the issue lies in the fact that ${\rm tr}_d(a g_n^{-1})$ does not factor through ${\rm tr}_d (a)$. Indeed from \eqref{ginv} we have that: ${\rm tr}_d (a g_n^{-1}) = {\rm tr}_d (a g_n) + (q-q^{-1}) {\rm tr}_d (a  e_n)$, which is not equal to ${\rm tr}_d (g_n^{-1}) {\rm tr}_d(a)$. This is because on the level of braids the framing of the word $a$ changes in $a  e_n$. Thus, ${\rm tr}_d (a e_n)$ does not factor through ${\rm tr}_d(a)$. In order to overcome this obstacle, the trace parameters $x_m$, $1 \leq m \leq d-1$, must be chosen so that they comprise a solution of the following non-linear ${\rm E}$-{\it system} of equations, for any $m \in \mathbb{Z}/d\mathbb{Z}$:
 
 \[
E^{(m)} = x_m\, E,
 \]
 or, equivalently:

\begin{equation}\label{esys}
\sum_{s=0}^{d-1} x_{m+s}\, x_{d-s} = x_m \sum_{s=0}^{d-1} x_s\, x_{d-s},
\end{equation}
where the indices are considered modulo $d$ and $x_0 =1$. An obvious solution of the ${\rm E}$-system is the ``trivial'' solution where all $x_i$'s take the value zero. Another solution easy to come up with is when the $x_i$'s are specialized to the $d$-th roots of unity. The full set of solutions was established by P. G\'erardin by solving the ${\rm E}$-system in $\mathbb{C}C_d$ \cite[Appendix]{jula}. Before continuing, we shall introduce some necessary notation in order to explain G\'erardin's method. Denote by $\delta_a$ the element $t^a$ of the canonical basis of $\mathbb{C}C_d$, where $a\in \mathbb{Z}/d\mathbb{Z}$. An element in the group algebra $\mathbb{C}C_d$ has the form: $\sum_{k=0}^{d-1}a_k t^k$. The {\it product by coordinates} in $\mathbb{C}C_d$ is defined  by the formula: 
\[
\left( \sum_{r=0}^{d-1} a_r t^r\right) \cdot 
\left( \sum_{s=0}^{d-1} b_s t^s\right)=
 \sum_{i=0}^{d-1} a_ib_i t^i
\]
and the  {\it convolution product} is defined by the formula:
\begin{equation} \label{AA1}
\left( \sum_{r=0}^{d-1} a_r t^r\right) \ast  
\left( \sum_{s=0}^{d-1} b_s t^s\right) = 
\sum_{r=0}^{d-1} \left( \sum_{s=0}^{d-1} a_s b_{r-s} \right) t^r .
\end{equation}

Denote now by $\chi_k$ the characters of the group $C_d$, namely: $\chi_k( t^m) = \cos\frac{2\pi km}{d} + i \sin \frac{2 \pi km}{d}$, where $k, m\in {\mathbb{Z}/d\mathbb{Z}}$. Define also the elements $\mathbf{i}_a:=\sum_{s=0}^{d-1} \chi_a( t^s) t^s \, \in \mathbb{C}C_d$, where $a\in {\mathbb Z}/d{\mathbb Z}$. The {\it Fourier transform} is the linear automorphism on $\mathbb{C}C_d$, defined by: 
\begin{equation}\label{fourtrans}
y:=  \sum_{r=0 }^{d-1}a_rt^r \mapsto \widehat{y}:= \sum_{m=0}^{d-1}(y\ast \mathbf{i}_m)(0)t^m  ,
\end{equation}
where $(y\ast \mathbf{i}_s)(0)$ denote the coefficient of $\delta_0$ in the convolution $y\ast \mathbf{i}_s$. For more details on the tools that are used here the reader is referred to \cite{jula, gojukola2}. In the following proposition, we present the most important properties of the Fourier transform that will be used in this paper.
\begin{proposition}[{\cite[Chapter~2]{te}}]\label{propietrans}
For any $y$ and $y^{\prime}$ in $\mathbb{C}C_d$, we have:
\begin{enumerate}
\item $\widehat{y \ast y^{\prime}}=\widehat{y}\cdot \widehat{y^{\prime}}$
\item $\widehat{y\cdot y^{\prime}}= d^{-1}\widehat{y} \ast \widehat{y^{\prime}}$
\item $\widehat{\delta}_a=\mathbf{i}_{-a}$
\item $\widehat{\mathbf{i}}_a=d \delta_a$
\item If $ y = \sum_{r=0}^{d-1}a_rt^r$, then 
$
\widehat{\widehat{y}}=  d \sum_{r=0}^{d-1}a_{d-r}t^r .
$
\end{enumerate}
\end{proposition}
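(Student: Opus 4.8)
The plan is to reduce all five identities to a single closed formula for the Fourier transform in coordinates, and then to verify each one by a direct manipulation of finite sums; the only substantive ingredient is the orthogonality of the characters $\chi_k$, everything else being bookkeeping.

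First I would unwind the definition \eqref{fourtrans}. Set $\zeta:=\chi_1(t)$, a primitive $d$-th root of unity, so that $\chi_m(t^s)=\zeta^{ms}$ and, since the $\chi_m$ are homomorphisms $C_d\to\mathbb{C}^{\ast}$ with values on the unit circle, $\overline{\chi_m(t^s)}=\chi_m(t^{-s})=\zeta^{-ms}$. Writing $y=\sum_{r=0}^{d-1}a_rt^r$ and reading off the coefficient of $\delta_0$ in the convolution $y\ast\mathbf{i}_m$ from \eqref{AA1}, one gets $\sum_{s=0}^{d-1}a_s\,\chi_m(t^{-s})=\sum_{s=0}^{d-1}a_s\,\zeta^{-ms}$, hence
\[
\widehat{y}=\sum_{m=0}^{d-1}\Big(\sum_{s=0}^{d-1}a_s\,\zeta^{-ms}\Big)t^m ,
\]
so that $\widehat{y}$ is the ordinary discrete Fourier transform of the coefficient vector $(a_r)_r$; from now on all indices are read modulo $d$. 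With this formula, identities (3) and (4) are immediate: for (3) substitute the coefficients of $\delta_a$ (namely $1$ at position $a$ and $0$ elsewhere) and recognise $\zeta^{-ma}$ as $\chi_{-a}(t^m)$; for (4) substitute the coefficients $\chi_a(t^s)=\zeta^{as}$ of $\mathbf{i}_a$, which leaves the geometric sum $\sum_{s=0}^{d-1}\zeta^{(a-m)s}$, equal to $d$ when $a\equiv m$ and to $0$ otherwise --- this is exactly the character orthogonality relation --- whence $\widehat{\mathbf{i}}_a=d\,\delta_a$.

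Next I would prove the convolution rule (1): starting from $y\ast y'=\sum_r\big(\sum_s a_sb_{r-s}\big)t^r$, apply the coordinate formula, interchange the order of summation, and substitute $s'=r-s$; the double sum then factors as $\big(\sum_s a_s\zeta^{-ms}\big)\big(\sum_{s'}b_{s'}\zeta^{-ms'}\big)$, which is the $m$-th coordinate of $\widehat{y}\cdot\widehat{y'}$. For (5) I would apply the coordinate formula twice: the $k$-th coordinate of $\widehat{\widehat{y}}$ equals $\sum_m\big(\sum_s a_s\zeta^{-ms}\big)\zeta^{-km}=\sum_s a_s\sum_m\zeta^{-(s+k)m}$, and the inner sum collapses by orthogonality to $d$ exactly when $s\equiv-k$, giving $\widehat{\widehat{y}}=d\sum_k a_{-k}t^k=d\sum_r a_{d-r}t^r$. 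Finally, for the product rule (2) I would run the same computation as in (1) on $y\cdot y'=\sum_r a_rb_rt^r$: expanding $\big(\widehat{y}\ast\widehat{y'}\big)_m=\sum_k \widehat{y}_k\,\widehat{y'}_{m-k}$ and summing over the convolution index $k$ produces, via orthogonality, a factor $d$ and forces the two coordinate indices to coincide, yielding $\widehat{y\cdot y'}=d^{-1}\,\widehat{y}\ast\widehat{y'}$; alternatively, (2) can be deduced formally from (1) and (5) together with the coordinatewise reflection $\sum a_rt^r\mapsto\sum a_{-r}t^r$, which commutes with both the Fourier transform and the product by coordinates.

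I do not expect any real obstacle here: the argument is pure bookkeeping, and the single point worth isolating and stating carefully is the orthogonality identity $\sum_{s=0}^{d-1}\zeta^{js}=d$ if $d\mid j$ and $0$ otherwise, which drives (4), (5), and the proof of (2). The only traps are the routine ones --- keeping every index modulo $d$ so that $a_{d-r}$, $a_{-r}$, and the $r=0$ term are mutually consistent, and not dropping the conjugate (equivalently, the sign in the exponent) in the coordinate formula for $\widehat{y}$ obtained in the first step.
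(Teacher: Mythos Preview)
Your proposal is correct. The coordinate formula you extract from \eqref{fourtrans}, namely $\widehat{y}=\sum_{m}\big(\sum_{s}a_s\zeta^{-ms}\big)t^m$, is exactly right, and each of the five verifications goes through as you describe; the only nontrivial input is indeed the orthogonality relation $\sum_{s=0}^{d-1}\zeta^{js}=d\cdot[\,d\mid j\,]$.

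As for comparison with the paper: note that the paper does \emph{not} prove this proposition at all. It is stated with a citation to \cite[Chapter~2]{te} and used as a black box in the proof of Theorem~\ref{esysol}. So there is no ``paper's approach'' to compare against; your direct coordinatewise verification is the standard argument one finds in any treatment of the discrete Fourier transform on a finite cyclic group, and it is entirely adequate here.
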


For the solutions of the ${\rm E}$-system we use the notation $({\rm x}_1, \ldots , {\rm x}_{d-1})$. Denoting $x$ the complex function on $C_d$ that that maps $0$ to $1$ and $k$ to ${\rm x}_k$,   we have the following by G\'erardin \cite[Appendix]{jula}, for which we give a detailed proof using the notation introduced in this paper.

\begin{theorem}[G\'erardin]\label{esysol}
The solutions of the ${\rm E}$-system are of the following form:
\[
{\rm x}_s = \frac{1}{|D|} \sum_{m \in D} \mathbf{i}_m (t^s), \quad 1 \leq s \leq d-1 ,
\]
where $x$ as above and $D$ is the support of the Fourier transform of $x$. Hence the solutions of the ${\rm E}$-system are parametrized by the non-empty subsets of $\mathbb{Z}/d\mathbb{Z}$.
\end{theorem}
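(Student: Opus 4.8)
\emph{Proof proposal.} The plan is to rewrite the ${\rm E}$-system as a single identity in the group algebra $\mathbb{C}C_d$ and then diagonalise it with the Fourier transform. Write $x=\sum_{k=0}^{d-1}x_kt^k\in\mathbb{C}C_d$, so that $x_0=1$ and $x\neq 0$. After the substitution $s\mapsto -s$ in the defining sum one has $d\,E^{(m)}=\sum_{s=0}^{d-1}x_{m+s}x_{-s}=\sum_{s=0}^{d-1}x_{m-s}x_s=(x\ast x)(m)$, where $(x\ast x)(m)$ denotes the coefficient of $\delta_m$ in the convolution square; in particular $dE=(x\ast x)(0)$. Hence the ${\rm E}$-system $E^{(m)}=x_mE$ (for all $m$) is equivalent to the single relation $x\ast x=dE\,x$ in $\mathbb{C}C_d$.

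Applying the Fourier transform and using Proposition~\ref{propietrans}(1) together with linearity, this becomes $\widehat{x}\cdot\widehat{x}=dE\,\widehat{x}$, i.e.\ $\widehat{x}_m^2=dE\,\widehat{x}_m$ for every coordinate $m$, so each $\widehat{x}_m$ is a root of $T^2-dE\,T$ and therefore $\widehat{x}_m\in\{0,dE\}$. Since $x\neq 0$ and the Fourier transform is an automorphism, $\widehat{x}\neq 0$; consequently $dE\neq 0$ and the support $D$ of $\widehat{x}$ is non-empty, with $\widehat{x}=dE\sum_{m\in D}\delta_m$.

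To recover $x$ I would apply the Fourier transform once more: by Proposition~\ref{propietrans}(3) and (5), $d\sum_{r}x_{-r}t^r=\widehat{\widehat{x}}=dE\sum_{m\in D}\mathbf{i}_{-m}$, and comparing the coefficients of $t^{-s}$ on both sides (using $\mathbf{i}_{-m}(t^{-s})=\mathbf{i}_m(t^s)$) gives $x_s=E\sum_{m\in D}\mathbf{i}_m(t^s)$ for all $s$. Evaluating at $s=0$ yields $1=x_0=E\sum_{m\in D}\mathbf{i}_m(t^0)=E\,|D|$, so $E=1/|D|$, which is exactly the claimed formula. For the converse, given any non-empty $D\subseteq\mathbb{Z}/d\mathbb{Z}$ I would set $x:=\tfrac{1}{|D|}\sum_{m\in D}\mathbf{i}_m$, check $x_0=1$, compute $\widehat{x}=\tfrac{d}{|D|}\sum_{m\in D}\delta_m$ via Proposition~\ref{propietrans}(4), verify $\widehat{x}\cdot\widehat{x}=\tfrac{d}{|D|}\widehat{x}$ coordinate-wise, and transform back to $x\ast x=\tfrac{d}{|D|}x$, which is the ${\rm E}$-system with $E=1/|D|$; distinct subsets $D$ yield Fourier transforms with distinct supports, hence distinct solutions, so the correspondence with non-empty subsets of $\mathbb{Z}/d\mathbb{Z}$ is a bijection.

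The conceptual crux is the very first step: spotting, after the reindexing $s\mapsto-s$, that $E^{(m)}$ is precisely the $m$-th coordinate of $x\ast x$, which collapses the quadratic system into the operator identity $x\ast x=dE\,x$; everything after that is routine bookkeeping with the four transform identities of Proposition~\ref{propietrans}. The only point needing a little care is excluding the degenerate case $E=0$, which is handled by the normalisation $x_0=1$ (equivalently $\widehat{x}\neq 0$), ensuring $D\neq\emptyset$ and that $E=1/|D|$ makes sense.
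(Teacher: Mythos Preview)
Your proof is correct and follows essentially the same route as the paper: both rewrite the ${\rm E}$-system as the convolution identity $x\ast x=(x\ast x)(0)\,x$ in $\mathbb{C}C_d$ (your $dE$ is exactly their $(x\ast x)(0)$), apply the Fourier transform via Proposition~\ref{propietrans}(1) to reduce to a coordinate-wise quadratic, solve it on the support $D$, and invert using Proposition~\ref{propietrans}(3),(5) together with the normalisation $x_0=1$ to pin down the constant as $1/|D|$. The one substantive addition in your write-up is the explicit converse---checking that every non-empty $D$ actually yields a solution and that distinct $D$'s give distinct solutions---which the paper's proof does not spell out, so your version is slightly more complete.
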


\begin{proof}
We will solve equation \eqref{esys} in $\mathbb{C}C_d$. Let $x = \sum_{k=0}^{d-1} {\rm x}_k t^k$ $\in \mathbb{C}C_d$ and observe that by using the convolution product in $\mathbb{C}C_d$ we have the following equalities:
\begin{equation}\label{convprod}
x\ast x =  \sum_{s=0}^{d-1} {\rm x}_{m+s} {\rm x}_{d-s} \,t^s  \quad \mbox{and} \quad (x\ast x)(0) = \sum_{s=0}^{d-1} {\rm x}_{s} {\rm x}_{d-s} .
\end{equation}
Combining now equations \eqref{esys} and \eqref{convprod} we obtain the functional equivalent of the ${\rm E}$-system. Indeed, we have that:
\begin{equation}\label{esysfunc}
x\ast x = (x \ast x) (0) \, x.
\end{equation}
Applying the Fourier transform on equation \eqref{esysfunc}, we then obtain:
\begin{equation}\label{esysft}
\widehat{x}^2 = (x \ast x) (0) \, \widehat{x}.
\end{equation}
The case $(x \ast x)(0) =0 $, which leads to $\widehat{x} =0$ and subsequently to $x=0$, is excluded by the  condition $x(0)=1$. Thus, if $\widehat{x} = \sum_{k=0}^{d-1} y_k t^k$, we have that \eqref{esysft} becomes:
\[
\sum_{k=0}^{d-1}y_k^2  t^k = (x \ast x) (0) \sum_{k=0}^{d-1}y_k  t^k
\]
and so we obtain:
\[
y_k = (x\ast x)(0).
\]
Denoting now by $D$ the support of $\widehat{x}$ we have that $D= \{ k \in \mathbb{Z}/d\mathbb{Z} \, | \, y_k = (x \ast x) (0) \}$ and we then deduce that:
\begin{equation}
\widehat{x} = \sum_{m \in D} (x\ast x) (0) \delta_m  ,
\end{equation}
which, using argument (4) of Proposition \ref{propietrans}, yields:
\[
 \widehat{\widehat{x}}=  (x\ast x) (0) \sum_{m\in D}\mathbf{i}_{d-m}.
\]
Having in mind now argument (5) of Proposition \ref{propietrans}, we deduce that:
\begin{equation}\label{dblft}
x=  \frac{1}{d}(x\ast x) (0) \sum_{m\in D} \mathbf{i}_{m} .
 \end{equation}
Since $x(0)=1$, we have that $\frac{1}{d}(x\ast x) (0)\, |D| = 1$, or equivalently, $\frac{1}{d}(x\ast x) (0)= \frac{1}{|D|}$. Therefore, \eqref{dblft} becomes:
\[
x= \frac{1}{|D|} \sum_{m\in D} \mathbf{i}_m
\]
and the proof of the Theorem is concluded.
\end{proof}

\begin{remark} \rm
The solution of the ${\rm E}$-system where the ${\rm x}_i$'s are $d$-th roots of unity is parametrized by the singleton subsets $\{ m \}$ of $ \mathbb{Z}/ d\mathbb{Z}$. On the other hand, the whole set $\mathbb{Z}/d \mathbb{Z}$ parametrizes the trivial solution.
\end{remark}

 \subsection{{\it The specialized trace}}\label{sptrsec}  We now fix throughout a solution $ X_D = ( {\rm x}_1 , \ldots , {\rm x}_{d-1} )$ of the ${\rm E}$-system parametrized by the non-empty subset $D$ of  $\mathbb{Z}/d\mathbb{Z}$.  As discussed above the following is true: 

\begin{theorem}[{\cite[Theorem 7]{jula}}]\label{trmultip}
If the trace parameters  $(x_1, \ldots , x_{d-1} )$ satisfy the  ${\rm E}$-condition,  then for any $a\in {\rm Y}_{d,n}(u)$ we have that:
\[
{\rm tr}_d(a e_n) = {\rm tr}_d(a) {\rm tr}_d(e_n).
\]
\end{theorem}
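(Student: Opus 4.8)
The plan is to prove the identity by induction on $n$, after a preliminary reduction. Since ${\rm tr}_d(e_n)=E$, what must be shown is that ${\rm tr}_d(a\,e_n)=E\,{\rm tr}_d(a)$ for $a\in{\rm Y}_{d,n}(q)$. Writing $e_n=\tfrac1d\sum_{s=0}^{d-1}t_n^{s}t_{n+1}^{d-s}$ and applying the trace rule ${\rm tr}_d(b\,t_{n+1}^{j})=x_j\,{\rm tr}_d(b)$ of Theorem~\ref{Juyutrace} (with $x_0=1$) to $b=a\,t_n^{s}\in{\rm Y}_{d,n}(q)$, one gets
\[
{\rm tr}_d(a\,e_n)=\frac1d\sum_{s=0}^{d-1}x_{d-s}\,{\rm tr}_d(a\,t_n^{s}).
\]
So it suffices to prove the following statement $P(n)$, slightly more general than the case at hand: \emph{for all $c\in{\rm Y}_{d,n}(q)$ and all $j\in\{1,\dots,n\}$, $\ \tfrac1d\sum_{s=0}^{d-1}x_{d-s}\,{\rm tr}_d(c\,t_j^{s})=E\,{\rm tr}_d(c)$}; the original statement is then $P(n)$ with $j=n$, $c=a$. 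Keeping the auxiliary index $j$ is essential for the induction, since eliminating $g_{n-1}$ from a word will force a framing generator down to a lower index.

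The base case $n=1$ is immediate: on ${\rm Y}_{d,1}(q)=\mathbb C(q)C_d$ one computes, for $c=t_1^{m}$, that $\tfrac1d\sum_s x_{d-s}\,{\rm tr}_d(t_1^{m+s})=\tfrac1d\sum_s x_{d-s}x_{m+s}=E^{(m)}$, and the ${\rm E}$-condition $E^{(m)}=x_m E$ rewrites this as $E\,{\rm tr}_d(c)$ — this is the only place the ${\rm E}$-condition enters. For the inductive step I invoke the inductive linear basis~\eqref{yhbasel} of ${\rm Y}_{d,n}(q)$ and, by linearity, reduce to $c=\mathfrak m_{n-1}w$ with $\mathfrak m_{n-1}$ a basis word of ${\rm Y}_{d,n-1}(q)$ and $w\in\{\,t_n^{\ell},\ g_{n-1}g_{n-2}\cdots g_i\,t_i^{\ell}\,\}$. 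When $j\le n-1$ the treatment is routine: if $w=t_n^{\ell}$ one commutes $t_n^{\ell}$ past $t_j$ and pulls it out by the trace rule; if $w$ is of braided type one moves the unique $g_{n-1}$ to the end of the word by the conjugation property, deletes it by the Markov property (picking up a factor $z$), and rotates back. In either case ${\rm tr}_d(c\,t_j^{s})$ becomes a scalar times ${\rm tr}_d(c'\,t_j^{s})$ with $c'\in{\rm Y}_{d,n-1}(q)$, and $P(n-1)$ at index $j$ closes the case. The subcase $j=n$, $w=t_n^{\ell}$ again reduces, exactly as in the base case, to $E^{(\ell)}\,{\rm tr}_d(\mathfrak m_{n-1})=E\,{\rm tr}_d(c)$ via the ${\rm E}$-condition.

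The one genuinely delicate case is $j=n$ with $w=g_{n-1}g_{n-2}\cdots g_i\,t_i^{\ell}$, and this is where I expect the main work to lie. The idea is to follow the framing generator $t_n^{s}$ as it migrates down the braid word: using $g_{n-1}t_n=t_{n-1}g_{n-1}$ (a case of \eqref{y5}) and the fact that $t_n$ commutes with $g_{n-2},\dots,g_i$ and with $t_i$, one rewrites $c\,t_n^{s}=\mathfrak m_{n-1}\,t_{n-1}^{s}\,g_{n-1}g_{n-2}\cdots g_i\,t_i^{\ell}$; the conjugation and Markov properties then delete $g_{n-1}$ at the cost of $z$; and the relation $t_{n-1}^{s}\,g_{n-2}\cdots g_i=g_{n-2}\cdots g_i\,t_i^{s}$ slides the framing to position $i$, giving
\[
{\rm tr}_d(c\,t_n^{s})=z\,{\rm tr}_d\!\big((\mathfrak m_{n-1}g_{n-2}\cdots g_i\,t_i^{\ell})\,t_i^{s}\big),
\]
with $\mathfrak m_{n-1}g_{n-2}\cdots g_i\,t_i^{\ell}\in{\rm Y}_{d,n-1}(q)$. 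Summing against $\tfrac1d x_{d-s}$ and applying $P(n-1)$ at index $i$ gives $zE\,{\rm tr}_d(\mathfrak m_{n-1}g_{n-2}\cdots g_i\,t_i^{\ell})$; on the other hand the same conjugation--Markov reduction applied directly to $c$ gives ${\rm tr}_d(c)=z\,{\rm tr}_d(\mathfrak m_{n-1}g_{n-2}\cdots g_i\,t_i^{\ell})$, so the two sides of $P(n)$ coincide (the degenerate subcase $i=n-1$, with no $g_{n-2}\cdots g_i$ factor, is identical). The obstacle — and the reason $P(n)$ rather than just the $j=n$ statement must be propagated through the induction — is precisely this bookkeeping: one must simultaneously handle the non-commutativity of $t_n$ with $g_{n-1}$, the Markov elimination of $g_{n-1}$, and the resulting change of index of the framing generator.
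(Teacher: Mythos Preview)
The paper does not actually contain a proof of this theorem: it is stated as Theorem~\ref{trmultip} with a citation to \cite[Theorem~7]{jula} and is used thereafter without argument. So there is no ``paper's own proof'' to compare against.

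That said, your argument is correct. The reduction to the quantity $\tfrac1d\sum_s x_{d-s}\,{\rm tr}_d(c\,t_j^{s})$ and the strengthened induction hypothesis $P(n)$ allowing the index $j$ to vary over $\{1,\dots,n\}$ are exactly the right moves; as you note, the varying $j$ is what makes the induction close in the braided case $w=g_{n-1}\cdots g_i t_i^{\ell}$, where the framing generator migrates from position $n$ to position $i$. Your case analysis on the inductive basis \eqref{yhbasel} is complete and the manipulations (using \eqref{y5}, the conjugation property, and the Markov rule) are all sound. One small quibble: you write after the base case that ``this is the only place the ${\rm E}$-condition enters,'' but you then (correctly) invoke it again in the subcase $j=n$, $w=t_n^{\ell}$ of the inductive step; this is harmless since you do acknowledge it there, but the earlier remark is slightly misleading.

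For context, the proof in \cite{jula} proceeds along essentially the same inductive scheme, so your approach is not just correct but also the standard one.
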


Subsequently, by Theorem~\ref{trmultip},  ${\rm tr}_d(a g_{n}^{-1})$ can factor through ${\rm tr}_d (a)$, as required \cite{jula}. Consequently, in this case, ${\rm tr}_d$ satisfies both positive and negative stabilization moves for framed braids. 

In order to define framed and classical link invariants using the trace ${\rm tr}_d$, one would like to specialize the parameters $x_i$  to a solution of the ${\rm E}$-system, as early as possible in the construction. For this reason the specialized trace ${\rm tr}_{d,D}$ is introduced in \cite{ChLa}.

\begin{definition}\label{sptrdef} \rm
Let $z$ be an indeterminate. The trace map ${\rm tr}_{d,D}$, defined as the trace ${\rm tr}_d$  with the parameters $x_1, \ldots , x_{d-1}$ specialized to the values ${\rm x}_1 , \ldots , {\rm x}_{d-1}$, shall be called the \emph{specialized trace}  with parameter $z$. 
\end{definition}

\begin{remark} \rm
Following Theorem~\ref{esysol} we have that \cite{jula4}:
\[
E_D:= {\rm tr}_{d,D} (e_i) = \frac{1}{|D|} , \quad \mbox{for all } i.
\]
Moreover, if $|D|=1$ we obtain $E_D =1$, while if $D = \mathbb{Z} / d\mathbb{Z}$ and so ${\rm E}_D = \frac{1}{d}$.
\end{remark}

\begin{remark}\rm
For $d=1$ the specialized trace ${\rm tr}_{1, \{0 \}}$ coincides with ${\rm tr}_{1}$ which, in turn, coincides with the Ocneanu trace $\tau$.
\end{remark}

\subsection{{\it Framed and classical link invariants from ${\rm Y}_{d,n}(q)$}} As mentioned earlier, the above apply naturaly to the construction of topological invariants for framed knots and links. We proceed now with introducing the re-scaling factor for ${\rm tr}_{d,D}$. We set:
\begin{equation}\label{lambdad}
\lambda_D := \frac{z- (q-q^{-1}) E_D}{z}.
\end{equation}

We then have the following:
\begin{theorem}{{\cite[Theorem~3.1]{ChJuKaLa}}}\label{frinvthm}
For any framed braid $\alpha \in \mathcal{F}_n$ the following mapping: 
\begin{equation}\label{phinv}
\Phi_{d,D}(q,\lambda_D)(\widehat{\alpha}) := \left ( \frac{1 - \lambda_D}{\sqrt{\lambda_D} (q-q^{-1})E_D} \right)^{n-1} \left ( \sqrt{\lambda_D} \right)^{\varepsilon(\alpha)} {\rm tr}_{d,D}(\gamma ( \alpha ) ) 
\end{equation}
is a 2-variable invariant of framed oriented links, where $\widehat{\alpha}$ is the closure of the framed braid $\alpha \in \mathcal{F}_n$, $\varepsilon ( \alpha)$ is the algebraic sum of the exponents of the braiding generators $\sigma_i$ in the braid word $\alpha$ and $\gamma$ is the natural epimorphism from $\mathbb{C}(q)\mathcal{F}_n$ to ${\rm Y}_{d,n}(q)$.
\end{theorem}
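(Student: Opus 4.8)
The plan is to verify that the proposed normalization of the trace $\mathrm{tr}_{d,D}$ satisfies the three requirements that make the closure map well-defined on isotopy classes of framed oriented links: invariance under conjugation (the first Markov move), invariance under positive and negative stabilization (the second Markov move), and independence of the chosen braid representative more generally via the framed Markov theorem \cite{ks}. Since these three conditions suffice for a function on braids to descend to an invariant of framed links, the proof amounts to checking each condition in turn.

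\smallbreak
First I would handle conjugation: if $\alpha,\beta \in \mathcal{F}_n$ then $\gamma(\alpha\beta)$ and $\gamma(\beta\alpha)$ are conjugate in $\mathrm{Y}_{d,n}(q)$, and the conjugation property of $\mathrm{tr}_d$ (hence of $\mathrm{tr}_{d,D}$) gives $\mathrm{tr}_{d,D}(\gamma(\alpha\beta)) = \mathrm{tr}_{d,D}(\gamma(\beta\alpha))$. Moreover $\varepsilon$ and the number of strands $n$ are invariant under conjugation, so the prefactors in \eqref{phinv} agree and $\Phi_{d,D}(\widehat{\alpha\beta}) = \Phi_{d,D}(\widehat{\beta\alpha})$. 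Second, I would treat the stabilization move $\alpha \mapsto \alpha\sigma_n^{\pm 1}$ where $\alpha \in \mathcal{F}_n \subset \mathcal{F}_{n+1}$. For the positive move, the Markov property of $\mathrm{tr}_d$ gives $\mathrm{tr}_{d,D}(\gamma(\alpha)g_n) = z\,\mathrm{tr}_{d,D}(\gamma(\alpha))$; since passing from $n$ to $n+1$ strands multiplies the bracketed factor by $\tfrac{1-\lambda_D}{\sqrt{\lambda_D}(q-q^{-1})E_D}$ and $\varepsilon(\alpha\sigma_n) = \varepsilon(\alpha)+1$ multiplies the second factor by $\sqrt{\lambda_D}$, invariance is equivalent to the identity
\[
\frac{1-\lambda_D}{\sqrt{\lambda_D}(q-q^{-1})E_D}\cdot \sqrt{\lambda_D}\cdot z = 1,
\]
which follows directly from the definition \eqref{lambdad} of $\lambda_D$ by clearing denominators. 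For the negative move I need $\mathrm{tr}_{d,D}(\gamma(\alpha)g_n^{-1})$; here \eqref{ginv} gives $g_n^{-1} = g_n - (q-q^{-1})e_n$, so $\mathrm{tr}_{d,D}(\gamma(\alpha)g_n^{-1}) = \mathrm{tr}_{d,D}(\gamma(\alpha)g_n) - (q-q^{-1})\mathrm{tr}_{d,D}(\gamma(\alpha)e_n)$, and this is exactly where Theorem~\ref{trmultip} is essential: because $X_D$ solves the $\mathrm{E}$-system, $\mathrm{tr}_{d,D}(\gamma(\alpha)e_n) = \mathrm{tr}_{d,D}(\gamma(\alpha))\,E_D$, so $\mathrm{tr}_{d,D}(\gamma(\alpha)g_n^{-1}) = (z - (q-q^{-1})E_D)\,\mathrm{tr}_{d,D}(\gamma(\alpha))$. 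Combined with $\varepsilon(\alpha\sigma_n^{-1}) = \varepsilon(\alpha)-1$ and the change in the bracketed prefactor, invariance under the negative move reduces to
\[
\frac{1-\lambda_D}{\sqrt{\lambda_D}(q-q^{-1})E_D}\cdot \frac{1}{\sqrt{\lambda_D}}\cdot (z-(q-q^{-1})E_D) = 1,
\]
which is again immediate from \eqref{lambdad}, noting $z\lambda_D = z-(q-q^{-1})E_D$.

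\smallbreak
Having checked both Markov moves, I would conclude by invoking the framed Markov theorem: since the closures $\widehat{\alpha}$ of framed braids exhaust all framed oriented links (Alexander-type theorem for framed links, cf. \cite{ks}) and two framed braids have isotopic closures if and only if they are related by a finite sequence of the moves (i) and (ii), the verified invariance shows $\Phi_{d,D}(q,\lambda_D)$ is constant on each isotopy class, hence a well-defined framed link invariant. Finally I would record that it genuinely depends on the two parameters $q$ and $\lambda_D$ (equivalently $z$), so it is a $2$-variable invariant; this is a formal remark rather than a computation.

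\smallbreak
I expect the main obstacle to be the negative stabilization move, and more precisely the need to \emph{factor} $\mathrm{tr}_{d,D}(\gamma(\alpha)e_n)$ through $\mathrm{tr}_{d,D}(\gamma(\alpha))$. Without the $\mathrm{E}$-condition this fails, as explained in the discussion preceding the statement (the framing of the word $\alpha$ is altered by multiplication with $e_n$), so the entire argument rests on having fixed $X_D$ to be an honest solution of the $\mathrm{E}$-system and on Theorem~\ref{trmultip}; everything else is bookkeeping with the prefactors and elementary algebra with the definition of $\lambda_D$. The remaining care is purely notational: making sure the exponent $\varepsilon$ counts only the braiding generators $\sigma_i$ (so that the framing generators $t_i$ contribute nothing to the re-scaling), and that the count of strands in the prefactor is updated correctly under stabilization.
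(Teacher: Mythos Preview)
Your proof is correct and follows precisely the approach the paper sets up: the paper does not include a self-contained proof of this theorem (it is cited from \cite{ChJuKaLa}), but the surrounding discussion in Section~\ref{invdefsec} and Theorem~\ref{trmultip} lays out exactly the ingredients you use---the conjugation property of $\mathrm{tr}_{d,D}$, the Markov rule for positive stabilization, and the crucial factorization $\mathrm{tr}_{d,D}(ae_n)=E_D\,\mathrm{tr}_{d,D}(a)$ (valid under the $\mathrm{E}$-condition) needed for negative stabilization via \eqref{ginv}. Your verification of the two normalization identities from the definition \eqref{lambdad} of $\lambda_D$ is the standard bookkeeping, and your invocation of the framed Markov theorem \cite{ks} to conclude is exactly what the paper's setup anticipates.
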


The invariants $\Phi_{d,D}$ satisfy a skein relation involving the framing and the braiding generators \cite{jula, ChJuKaLa}:
\begin{equation}\label{phiskein}
\frac{1}{\sqrt{\lambda_D}} \Phi_{d,D}(L_{+}) - \sqrt{\lambda_D}\, \Phi_{d,D}(L_{-}) = \frac{q-q^{-1}}{d} \sum_{s=0}^{d-1} \Phi_{d,D} (L_{s}) ,
\end{equation}
 where, for $\beta \in \mathcal{F}_n$, $L_+ = \widehat{\beta \sigma_i}$ , $L_{-} = \widehat{\beta \sigma_i^{-1}}$ and $L_s = \widehat{\beta t_i^{s} t_{i+1}^{-s}}$ are identical links except in one crossing (see Figure~\ref{phils}).
 
 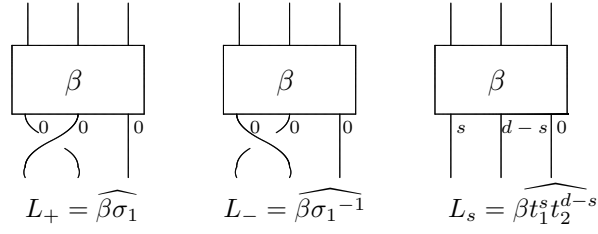
\begin{figure}[th]
 \centering
\scalebox{1}{\begin{picture}(216,80)
%\begin{picture}{330,80}
%%%%PRIMERO
%%%%
\put(20,53){$\beta$}
%%%%%

\qbezier(6,70)(6,78)(6,86) 
\qbezier(25,70)(25,78)(25,86)
\qbezier(44,70)(44,78)(44,86)
%%%%

\qbezier(5,20)(4,22)(5,24) 
\qbezier(25,20)(26,22)(25,24) 
%%%%%

%%%%%%%%
%%%cuadrado
\qbezier(0,44)(0,59)(0,70)
\qbezier(50,44)(50,59)(50,70)

\qbezier(0,70)(25,70)(50,70)
\qbezier(0,44)(25,44)(50,44)
%%%%%%%%
\qbezier(5,24)(5,29)(15,34)
\qbezier(15,34)(25,39)(25,44)
\qbezier(25,24)(25,28)(20,31)
\qbezier(10,37)(5,40)(5,44)
%%%%%%%%
\qbezier(44,20)(44,32)(44,44)
%%%%%%%%%%%

%%%% SEGUNDO

\put(100,53){$\beta$}
%%%%%

\qbezier(86,70)(86,78)(86,86) 
\qbezier(105,70)(105,78)(105,86)
\qbezier(124,70)(124,78)(124,86)
%%%%

\qbezier(85,20)(84,22)(85,24) 
\qbezier(105,20)(106,22)(105,24) 
%%%%%

%%%%%%%%
%% cuadrado
\qbezier(80,44)(80,59)(80,70)
\qbezier(130,44)(130,59)(130,70)

\qbezier(80,70)(105,70)(130,70)
\qbezier(80,44)(105,44)(130,44)
%%%%%%%%

\qbezier(85,24)(85,28)(90,31)
\qbezier(100,37)(105,40)(105,44)
\qbezier(105,24)(105,29)(95,34)
\qbezier(95,34)(85,39)(85,44)

%%%%%%%%
\qbezier(124,20)(124,32)(124,44)
%%%%%%%%%%%

%%%TERCERO

\put(180,53){$\beta$}
%%%%%

\qbezier(166,70)(166,78)(166,86) 
\qbezier(185,70)(185,78)(185,86)
\qbezier(204,70)(204,78)(204,86)
%%%%
%%%% cuadrado
\qbezier(160,44)(160,59)(160,70)
\qbezier(210,44)(210,59)(210,70)

\qbezier(160,70)(185,70)(210,70)
\qbezier(160,44)(215,44)(210,44)
%%%%%%%%

%%%%%%%%
\qbezier(166,20)(166,32)(166,44) 
\qbezier(185,20)(185,32)(185,44)
\qbezier(204,20)(204,32)(204,44)
%%%%%%%%%%%

%%%%CUARTO
%\put(260,53){$\beta$}
%%%%%

%\qbezier(246,70)(246,78)(246,86) 
%\qbezier(265,70)(265,78)(265,86)
%\qbezier(284,70)(284,78)(284,86)
%%%%

%\qbezier(245,20)(244,22)(245,24) 
%\qbezier(265,20)(266,22)(265,24) 
%%%%%

%%%% cuadrado
%\qbezier(240,44)(240,59)(240,70)
%\qbezier(290,44)(290,59)(290,70)

%\qbezier(240,70)(265,70)(290,70)
%\qbezier(240,44)(295,44)(290,44)
%%%%%%%%
%\qbezier(245,24)(245,29)(255,34)
%\qbezier(255,34)(265,39)(265,44)
%\qbezier(265,24)(265,28)(260,31)
%\qbezier(250,37)(245,40)(245,44)
%%%%%%%%
%\qbezier(284,20)(284,32)(284,44)
%%%%%%%%%%%
\put(10,37){\tiny{$0$}}
\put(25,37){\tiny{$0$}}
\put(46,37){\tiny{$0$}}

\put(90,37){\tiny{$0$}}
\put(105,37){\tiny{$0$}}
\put(125,37){\tiny{$0$}}

\put(168,37){\tiny{$s$}}
\put(185,37){\tiny{$d-s$}}
\put(206,37){\tiny{$0$}}

%\put(250,37){\tiny{$s$}}
%\put(264,37){\tiny{$d-s$}}
%\put(286,37){\tiny{$0$}}

%\put(310,50){\tiny{$s=0, \ldots ,d-1$}}
%%%%%
\put(5,5){\small{$L_{+}=\widehat{\beta \sigma_1}$}}
\put(80,5){\small{$L_{-}=\widehat{\beta {\sigma_1}^{-1}}$}}
\put(165,5){\small{$L_s=\widehat{\beta t_1^st_2^{d-s}}$}}
%\put(240,5){\tiny{$L_{s\times}=\widehat{\beta t_1^st_2^{d-s}}\sigma_1$}}

\end{picture}}
\caption{The framed links in the skein relation in open braid form.}
\label{phils}
\end{figure}

Restricting now to the case of classical links, which can be seen as framed links with all framings zero, we obtain from $\Phi_{d,D}$ invariants for classical knots and links. We shall denote these invariants by $\Theta_{d,D}$. We have:

\begin{equation}\label{thinv}
\Theta_{d,D}(q, \lambda_D)(\widehat{\alpha}) := \left ( \frac{1 - \lambda_D}{\sqrt{\lambda_D} (q-q^{-1})E_D} \right)^{n-1} \left ( \sqrt{\lambda_D} \right)^{\varepsilon(\alpha)} {\rm tr}_{d,D}(\delta ( \alpha ) ) ,
\end{equation}
where $\widehat{\alpha}$ is the closure of $\alpha \in B_n$, $\varepsilon ( \alpha)$ is as in Theorem~\ref{frinvthm} and $\delta$ denotes the restriction of $\gamma$ to $\mathbb{C}(q)B_n$. 

One would like to compare the invariants $\Theta_{d,D}$ to other known invariants of classical knots and links, and especially to the Homflypt polynomial since the algebra ${\rm Y}_{d,n}(q)$ is a generalization of the algebra ${\rm H}_n(q)$. Note that the family of invariants $\left \{ \Theta_{d,D} \right \}$ includes the Homflypt polynomial,  since for $d=1$ which is the case where all framings are zero, the algebras ${\rm H}_n(q)$ and ${\rm Y}_{1,n}(q)$ coincide. Moreover, in this case the traces $\tau$, ${\rm tr}_1$ and ${\rm tr}_{1, \{0\}}$ all coincide. Hence, for any classical braid $\alpha \in B_n$ we have:
\[
P(q, \lambda_{\rm H})(\widehat{\alpha} ) = \Theta_{1, \{0 \}} (\widehat{\alpha}) = \left (  \frac{1 - \lambda_{\rm H}}{ \sqrt{\lambda_{\rm H}}(q -q^{-1})} \right)^{n-1} \left ( \sqrt{\lambda_{\rm H}} \right)^{\varepsilon(\alpha)} {\rm tr}_{1, \{0\}} (\delta (\alpha)).
\]
For $d>1$, the two algebras coincide only in the cases where $q = \pm 1$ and $E_D=1$ \cite[Theorem 5]{ChLa}. However, there are no algebra homomorphisms connecting the algebras and the traces \cite{ChLa}, so as to compare the invariants $\Theta_{d,D}$ and $P$ algebraically. Further, the skein relation of $\Phi_{d,D}$ has no topological interpretation in the case of classical knots and links because it introduces framings. This makes it very difficult to compare the invariants $\Theta_{d,D}$ to $P$ using diagrammatic methods. The problem of comparing the invariants $\Theta_{d,D}$ to the Homflypt polynomial has been an open problem for a while and eventually it has been solved in \cite{ChJuKaLa}, as we will explain in the next section.

\section{Identifying the classical link invariants from ${\rm Y}_{d,n}(q)$}\label{yhinvsec}
In this section we will focus on the classical knot and link invariants $\Theta_d$ that are derived from the algebras ${\rm Y}_{d,n}(q)$ and we shall see that they are {\it not topologically equivalent to the Homflypt polynomial}. 
\subsection{{\it The specialized trace for classical links}}\label{brsec} We start by considering a classical link as a closure of a braid $\alpha \in B_n$. By mapping the braid group in the Yokonuma-Hecke algebra via the natural homomorphism $\delta: \mathbb{C}(q)B_n \longrightarrow {\rm Y}_{d,n}(q)$ that sends $\sigma_i \mapsto g_i$, we observe that the image of any $\alpha \in B_n$ through $\delta$ involves only the braiding generators and the elements $e_i$, but not directly the framing generators. We denote by $ {\rm Y}_{d,n}^{({\rm br})}(q):=\delta(\mathbb{C}(q)B_n)$. We note further that from the quadratic equation \eqref{yqeq} we have the following:
\begin{equation}\label{gibreq1}
(q-q^{-1}) e_i g_i = g_i^2 - 1 \, \in \,  {\rm Y}_{d,n}^{({\rm br})}(q).
\end{equation}
This leads to the equation \cite[Proposition 4.1]{ChJuKaLa}:
\begin{equation}\label{gibreq2}
(q-q^{-1}) e_i = g_i^3 - g_i - (q-q^{-1})^2 e_i g_i \, \in \,  {\rm Y}_{d,n}^{({\rm br})}(q).
\end{equation}
From the above we deduce that:
\[
g_i^{-1} = g_i - (q-q^{-1})e_i \in  {\rm Y}_{d,n}^{({\rm br})}(q).
\]

Note that by combining \eqref{gibreq1} and \eqref{gibreq2} we obtain:
\begin{equation}\label{eibr}
e_i =\frac{1}{q-q^{-1}} (g_i^3 - g_i )- (g_i^2 - 1)\in  {\rm Y}_{d,n}^{({\rm br})}(q).
\end{equation}
This means that the algebra ${\rm Y}_{d,n}^{({\rm br})}(q)$ coincides with the subalgebra that is generated by the elements $g_1 , \ldots , g_{n-1}, e_1 , \ldots , e_{n-1}$. We thus have the following:
\begin{proposition}[{\cite[Proposition 4.1 and Remark 4.2]{ChJuKaLa}}]
The image of the $\mathbb{C}(q)$-algebra homomorphism $\delta$ is the subalgebra ${\rm Y}_{d,n}^{({\rm br})}(q)$ of ${\rm Y}_{d,n}(q)$, generated by $g_1,  \ldots , g_{n-1}$.
\end{proposition}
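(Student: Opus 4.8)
The plan is to show the two-sided containment $\delta(\mathbb{C}(q)B_n) = {\rm Y}_{d,n}^{({\rm br})}(q)$, where the right-hand side denotes the subalgebra of ${\rm Y}_{d,n}(q)$ generated by $g_1,\ldots,g_{n-1}$. The inclusion $\delta(\mathbb{C}(q)B_n) \subseteq {\rm Y}_{d,n}^{({\rm br})}(q)$ is immediate: $\delta$ is an algebra homomorphism sending each $\sigma_i$ to $g_i$, so the image of any element of $\mathbb{C}(q)B_n$ is a $\mathbb{C}(q)$-linear combination of products of the $g_i^{\pm 1}$; by equation \eqref{ginv}, $g_i^{-1} = g_i - (q-q^{-1})e_i$, but as shown in \eqref{eibr} each $e_i$ itself lies in the subalgebra generated by the $g_j$'s, so in fact the image lies in $\langle g_1,\ldots,g_{n-1}\rangle = {\rm Y}_{d,n}^{({\rm br})}(q)$.

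For the reverse inclusion, I would argue that every generator $g_i$ of ${\rm Y}_{d,n}^{({\rm br})}(q)$ is plainly the image $\delta(\sigma_i)$, and since ${\rm Y}_{d,n}^{({\rm br})}(q)$ is by definition the subalgebra generated by these elements, while $\delta(\mathbb{C}(q)B_n)$ is a subalgebra of ${\rm Y}_{d,n}(q)$ containing all of them, we get ${\rm Y}_{d,n}^{({\rm br})}(q) \subseteq \delta(\mathbb{C}(q)B_n)$. Combining the two inclusions gives the equality. The only subtlety — and the point the proof really rests on — is the first inclusion: a priori the image of $\delta$ could contain the elements $e_i$ (entering through the inverses $g_i^{-1}$ via \eqref{ginv}), which are not visibly words in the $g_j$'s. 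The key input resolving this is precisely equation \eqref{eibr}, obtained by combining \eqref{gibreq1} and \eqref{gibreq2}, which expresses $e_i$ as a polynomial in $g_i$. So the main (and essentially only) obstacle is establishing \eqref{gibreq2}, i.e.\ that $(q-q^{-1})e_i = g_i^3 - g_i - (q-q^{-1})^2 e_i g_i$; but this is a direct computation from the quadratic relation \eqref{yqeq}: multiply \eqref{gibreq1} on the left by $g_i$ to get $(q-q^{-1})e_i g_i^2 = g_i^3 - g_i$, then substitute $g_i^2 = 1 + (q-q^{-1})e_i g_i$ on the left and rearrange, using that $e_i$ commutes appropriately and is idempotent where needed.

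In writing this up I would state it briefly: first record the easy inclusion ${\rm Y}_{d,n}^{({\rm br})}(q) \subseteq \delta(\mathbb{C}(q)B_n)$ from $g_i = \delta(\sigma_i)$; then for the other direction note that $\delta$ maps $\sigma_i^{\pm 1}$ into ${\rm Y}_{d,n}^{({\rm br})}(q)$ — for $\sigma_i^{-1}$ using \eqref{ginv} together with \eqref{eibr} to see $g_i^{-1} \in {\rm Y}_{d,n}^{({\rm br})}(q)$ — and since these generate $\mathbb{C}(q)B_n$ as an algebra and ${\rm Y}_{d,n}^{({\rm br})}(q)$ is a subalgebra, the whole image lands there. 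Everything needed (equations \eqref{gibreq1}, \eqref{gibreq2}, \eqref{eibr}) has already been assembled in the surrounding discussion, so the proof is essentially a two-line observation; I expect no serious difficulty beyond being careful that \eqref{eibr} genuinely exhibits $e_i$ inside the subalgebra generated by the braiding generators, which it does since its right-hand side is manifestly a polynomial in $g_i$ alone.
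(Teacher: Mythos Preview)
Your proposal is correct and follows essentially the same argument as the paper: the discussion preceding the proposition establishes \eqref{gibreq1}, \eqref{gibreq2} and hence \eqref{eibr}, showing that each $e_i$ (and therefore each $g_i^{-1}$) lies in the subalgebra generated by the $g_j$'s, which is exactly the key step you identify. The two inclusions then follow just as you describe, so your write-up matches the paper's reasoning.
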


Observe now that when computing the specialized trace ${\rm tr}_{d,D}$ on $\alpha \in B_n$, the framing generators appear only while applying the quadratic relation \eqref{yqeq} or the inverse relation \eqref{ginv} and then only in the form of the idempotents $e_i$. For this reason the rule involving the framing generators of the specialized trace ${\rm tr}_{d,D}$ can be substituted by two new rules that involve only the idempotents $e_i$. We have the following:
\begin{theorem}[{\cite[Theorem 4.3]{ChJuKaLa}}]\label{sptrwei}
Let $m\in \{1, \ldots , d \}$ and set $E_m := \frac{1}{m}$. Let $z$ be an indeterminate over $\mathbb{C}(q)$. There exists a unique linear Markov trace
\[
{\rm tr}_{d,m}: \bigcup_{n \geq 0}{\rm Y}_{d,n}^{(\rm br)}(q) \longrightarrow \mathbb{C}(q)[z]
\]
defined inductively on ${\rm Y}_{d,n}^{(\rm br)}(q)$, for all $n \geq 0$, by the following rules:
\[
\begin{array}{lrcll}
(i) & {\rm tr}_{d,m} (ab)& = & {\rm tr}_{d,m}(ba) & a, b \in {\rm Y}_{d,n}^{(\rm br)}(q)\\
(ii) & {\rm tr}_{d,m} (\mathbf{1}_{n+1})& = & 1 &\\
(iii) & {\rm tr}_{d,m} (ag_n)& = & z\, {\rm tr}_{d,m}(a) & a \in {\rm Y}_{d,n}^{(\rm br)}(q)\\
(iv) & {\rm tr}_{d,m} (ae_n)& = & E_m\, {\rm tr}_{d,m}(a) & a \in {\rm Y}_{d,n}^{(\rm br)}(q)\\
(iv) & {\rm tr}_{d,m} (ae_ng_n)& = & z\, {\rm tr}_{d,m}(a) & a \in {\rm Y}_{d,n}^{(\rm br)}(q) ,\\
\end{array}
\]
where $\mathbf{1}_{n+1}$ denotes the unit in ${\rm Y}_{d,n+1}(q)$. For all $a \in \bigcup_{n \geq 0} {\rm Y}_{d,n}^{(\rm br)}(q)$, we have that ${\rm tr}_{d,m} (a ) = {\rm tr}_{d,D} (a)$ where $D$ is any subset of $\mathbb{Z}/d\mathbb{Z}$ such that $\vert D \vert = m$. Note that, in this case $E_m = E_D$.
\end{theorem}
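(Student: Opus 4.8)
The plan is to obtain ${\rm tr}_{d,m}$ as the restriction to $\bigcup_{n}{\rm Y}_{d,n}^{(\rm br)}(q)$ of the specialized trace ${\rm tr}_{d,D}$, for any $D\subseteq\mathbb Z/d\mathbb Z$ with $|D|=m$ (whose existence follows from Theorem~\ref{Juyutrace} together with Definition~\ref{sptrdef}). I number the five defining rules (i)--(v), so that (v) is the rule ${\rm tr}_{d,m}(ae_ng_n)=z\,{\rm tr}_{d,m}(a)$. The proof then splits into: \emph{(a)} the restriction of ${\rm tr}_{d,D}$ to $\bigcup_n{\rm Y}_{d,n}^{(\rm br)}(q)$ satisfies (i)--(v); and \emph{(b)} a linear functional on $\bigcup_n{\rm Y}_{d,n}^{(\rm br)}(q)$ satisfying (i)--(v) is unique. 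Part \emph{(a)} yields existence (and the consistency of the recursion), and with \emph{(b)} it forces ${\rm tr}_{d,m}={\rm tr}_{d,D}\big|_{\bigcup_n{\rm Y}_{d,n}^{(\rm br)}(q)}$ for \emph{every} such $D$ — which is the last assertion of the statement, with $E_m=1/m=1/|D|=E_D$.

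For \emph{(a)}: rules (i), (ii), (iii) are inherited verbatim from the conjugation property, normalization, and Markov property of ${\rm tr}_{d,D}$ on the full algebra, once one notes that $g_n\in{\rm Y}_{d,n+1}^{(\rm br)}(q)$ and, by \eqref{eibr}, $e_n\in{\rm Y}_{d,n+1}^{(\rm br)}(q)$, so all the elements occurring lie in the braid subalgebra. Rule (iv) is Theorem~\ref{trmultip}: since the parameters are specialized to a solution $X_D$ of the ${\rm E}$-system, ${\rm tr}_{d,D}(ae_n)={\rm tr}_{d,D}(a)\,{\rm tr}_{d,D}(e_n)=E_D\,{\rm tr}_{d,D}(a)=E_m\,{\rm tr}_{d,D}(a)$. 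For rule (v) I would expand $e_n=\tfrac1d\sum_{s=0}^{d-1}t_n^st_{n+1}^{d-s}$ and use $t_{n+1}g_n=g_nt_n$ from \eqref{y5} to write, for $a\in{\rm Y}_{d,n}^{(\rm br)}(q)$,
\[
a\,e_n\,g_n=\frac1d\sum_{s=0}^{d-1}a\,t_n^s\,g_n\,t_n^{d-s}.
\]
The conjugation property then moves $t_n^{d-s}$ to the front, the Markov rule (iii) of ${\rm tr}_{d,D}$ applies to $t_n^{d-s}a\,t_n^{s}\in{\rm Y}_{d,n}(q)$, and conjugation together with $t_n^d=1$ collapses $t_n^{d-s}a\,t_n^{s}$ to $a$ under the trace; each summand equals $z\,{\rm tr}_{d,D}(a)$, so averaging over $s$ gives rule (v).

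For \emph{(b)} I would induct on $n$. The base case is immediate, since ${\rm Y}_{d,1}^{(\rm br)}(q)=\mathbb C(q)\mathbf 1$ and (i)--(ii) fix the trace. For the inductive step one uses an inductive spanning set of ${\rm Y}_{d,n+1}^{(\rm br)}(q)$ of the form $\{\mathfrak m_n\,w\}$, with $\mathfrak m_n$ ranging over a basis of ${\rm Y}_{d,n}^{(\rm br)}(q)$ and $w$ over a finite set of tails — the braid-algebra counterpart of the basis \eqref{yhbasel} (see \cite[Prop.~4.1 and Rem.~4.2]{ChJuKaLa}), in which the framing monomials $t_i^k$ are replaced by suitable words in the idempotents $e_j$. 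On each word $\mathfrak m_n\,w$ one uses the conjugation property to cycle the leading factors of $\mathfrak m_n$ around, and the relations \eqref{y1}--\eqref{y5}, $e_ig_i=g_ie_i$, \eqref{yqeq} and \eqref{ginv}, to absorb into ${\rm Y}_{d,n}^{(\rm br)}(q)$ everything except a single $g_n$ and/or $e_n$; what remains is one of $a$, $a\,g_n$, $a\,e_n$, $a\,e_n\,g_n$ with $a\in{\rm Y}_{d,n}^{(\rm br)}(q)$, on which exactly one of (ii)--(v) yields $c\cdot{\rm tr}(a)$ with $c\in\mathbb C(q)[z]$ independent of the trace. By the induction hypothesis ${\rm tr}(a)$ is already determined, hence so is ${\rm tr}(\mathfrak m_n\,w)$, and any two functionals obeying (i)--(v) agree on ${\rm Y}_{d,n+1}^{(\rm br)}(q)$.

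I expect the main obstacle to be the reduction inside \emph{(b)}: establishing that the prescribed tails really do span ${\rm Y}_{d,n+1}^{(\rm br)}(q)$ over ${\rm Y}_{d,n}^{(\rm br)}(q)$ and that right-multiplying a basis word by $g_n$ — hence, via \eqref{eibr}, by $e_n$ — stays within that span, a careful but essentially routine ``top-strand removal'' computation. Its delicate feature is that, unlike in the full algebra, the individual framing generators $t_i$ do \emph{not} belong to ${\rm Y}_{d,n}^{(\rm br)}(q)$, so the $t_{n+1}^k$-type tails of \eqref{yhbasel} have no direct analogue and must be traded for $e_n$ before rules (iv)--(v) can be invoked; checking that this trade is always possible is where the combinatorial work concentrates.
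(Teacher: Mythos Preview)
The paper does not actually prove Theorem~\ref{sptrwei}: it is stated there as a citation of \cite[Theorem~4.3]{ChJuKaLa}, preceded only by the one-sentence heuristic that, on braid words, the framing generators occur solely through the idempotents $e_i$, so the framing rule of ${\rm tr}_{d,D}$ can be replaced by rules for $e_n$ and $e_ng_n$. There is therefore no proof in the paper to compare against.

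Your outline is a correct and natural proof, and it matches that heuristic exactly. Part \emph{(a)} is fine: the derivations of rules (iv) and (v) from Theorem~\ref{trmultip} and from the expansion of $e_ng_n$ via \eqref{y5} are valid computations (in (v) note that the intermediate elements $t_n^{d-s}at_n^s$ live in the full ${\rm Y}_{d,n}(q)$, not in the braid subalgebra, but this is harmless since you are using ${\rm tr}_{d,D}$ on the full algebra at that point). Part \emph{(b)} is also the right strategy; the inductive spanning set for ${\rm Y}_{d,n+1}^{(\rm br)}(q)$ over ${\rm Y}_{d,n}^{(\rm br)}(q)$ that you need is precisely what is developed in \cite[\S4]{ChJuKaLa}, so your anticipated ``obstacle'' is not a gap but simply a reference to that source. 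The final claim ${\rm tr}_{d,m}={\rm tr}_{d,D}$ for any $D$ with $|D|=m$ then follows, as you say, from existence plus uniqueness.
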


 As proved in \cite{ChJuKaLa} the invariants $\Theta_{d,D}$ do not depend on the sets $D$, so the notation was simplified. More precisely, Theorem~\ref{sptrwei} implies that the specialized trace ${\rm tr}_{d,D}$ on classical knots and links depends only on $|D|$ and not on the solution $X_D$ of the ${\rm E}$-system. Further,  by results in \cite{ChJuKaLa}, for $d$, $d^\prime$ positive integers with $d\leq d^\prime$, we have $\Theta_{d,D} = \Theta_{d^\prime, D^\prime}$ as long as $|D|= |D^\prime|$. We deduce that, if $|D^\prime| = d$, then $\Theta_{d^\prime, D^\prime} = \Theta_{d, \mathbb{Z}/d\mathbb{Z}}$. Therefore, the invariants $\Theta_{d,D}$ can be parametrized by the natural numbers, setting $\Theta_d :=\Theta_{d, \mathbb{Z}/d\mathbb{Z}}$ for all $d \in \mathbb{Z}_{>0}$ \cite[Proposition~4.6]{ChJuKaLa}. 

For the rest of the paper $D$ will always be $\mathbb{Z}/d\mathbb{Z}$, implying that $E_D = 1/d$. In order not to confuse the reader, we will keep on using our initial notation for $E_D$ and $\lambda_D$  as well as for the traces ${\rm tr}_d$ and ${\rm tr}_{d,D}$. 
\smallbreak
We shall proceed now with the comparison of the invariants $\Theta_d$ to the Homflypt polynomial $P$.  As it turned out \cite{ChJuKaLa}, the behaviour of $\Theta_d$ depends on whether it is applied on knots or on links.  A braid $\alpha \in B_n$ closes to a knot if and only if $\mu(\alpha)$ is an $n$-cycle in $S_n$, where $\mu$ denotes the natural surjection from $B_n$ to $S_n$. This allows us to treat the case of knots and the case of links separately.

 \subsection{{\it The invariants $\Theta_d$ on classical knots}} The invariants $\Theta_d$ are topologically equivalent to the Homflypt polynomial for the case of knots. In order to prove this we shall need first the following proposition:
 \begin{proposition}[{\cite[Theorem~5.8]{ChJuKaLa}}]\label{tranfprop}  The transformation $z \mapsto z /E_D$ of the trace parameter $z$ of the Ocneanu trace $\tau$  corresponds to the transformation $\lambda_{\rm H} \mapsto \lambda_D$ on the Homflypt polynomial at variables $(q, \lambda_{\rm H})$.
\end{proposition}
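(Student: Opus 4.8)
The plan is to prove the statement by a direct computation comparing the two formulas that express the relevant polynomial variables in terms of the trace parameter $z$, namely \eqref{theta} for the Homflypt polynomial and \eqref{lambdad} for the invariants $\Theta_{d,D}$. Recall that $P(q,\lambda_{\rm H})$ is built from the Ocneanu trace $\tau$ with parameter $z$, the two being tied together by $\lambda_{\rm H}=\tau(h_i^{-1})/\tau(h_i)=(z+q^{-1}-q)/z$, whereas $\lambda_D=(z-(q-q^{-1})E_D)/z$. Since $E_D=1/|D|\neq 0$, the assignment $z\mapsto z/E_D$ (equivalently $z\mapsto |D|\,z$) is a genuine rescaling of the trace parameter, i.e. an automorphism of $\mathbb{C}(q)[z]$, so it is legitimate to ask what it does to $\lambda_{\rm H}$.

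First I would carry out this substitution in the right-hand side of \eqref{theta}:
\[
\frac{z/E_D+q^{-1}-q}{z/E_D}=\frac{z+(q^{-1}-q)E_D}{z}=\frac{z-(q-q^{-1})E_D}{z}=\lambda_D ,
\]
the last equality being precisely \eqref{lambdad}; this is exactly the asserted correspondence $z\mapsto z/E_D \longleftrightarrow \lambda_{\rm H}\mapsto\lambda_D$. To see that it is a correspondence of the invariants and not merely of the abstract parameters, I would then record the companion identities governing the remaining factors of \eqref{xinv}: from \eqref{theta} one has $1-\lambda_{\rm H}=(q-q^{-1})/z$, so under $z\mapsto z/E_D$ this becomes $(q-q^{-1})E_D/z=1-\lambda_D$; hence $\sqrt{\lambda_{\rm H}}\mapsto\sqrt{\lambda_D}$ and the Homflypt normalization factor $\frac{1-\lambda_{\rm H}}{\sqrt{\lambda_{\rm H}}(q-q^{-1})}$ of \eqref{xinv} transforms into $\frac{1-\lambda_D}{\sqrt{\lambda_D}(q-q^{-1})}$. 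Comparing with \eqref{thinv}, this differs from the normalization factor of $\Theta_{d,D}$ only by a factor $E_D$ per strand, i.e. by $E_D^{\,n-1}$ overall --- which is exactly the discrepancy absorbed when ${\rm tr}_{d,D}$ is matched against $\tau$ on braids whose underlying permutation is an $n$-cycle.

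I do not expect any real obstacle here: the mathematical content is the one-line identity above, and everything else is bookkeeping. The only point requiring care is conceptual rather than computational --- making precise what ``corresponds'' should mean, namely that after the rescaling $z\mapsto z/E_D$ the polynomial variable attached to $\tau$ through \eqref{theta} is exactly $\lambda_D$, and that the extra powers of $E_D$ separating the definition \eqref{thinv} of $\Theta_{d,D}$ from \eqref{xinv} are precisely the ones produced by this rescaling, so that the statement fits together with the comparison of $\Theta_d$ and $P$ on knots carried out next.
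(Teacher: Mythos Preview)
Your proposal is correct and follows exactly the same approach as the paper: the proof there is the single straightforward computation $\lambda_{\rm H}=\frac{z/E_D-(q-q^{-1})}{z/E_D}=\frac{z-(q-q^{-1})E_D}{z}=\lambda_D$, which is precisely your displayed identity. The additional bookkeeping you include about the normalization factors is not part of the paper's proof of this proposition, but it is harmless and anticipates the subsequent comparison with $P$ on knots.
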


\begin{proof}
The proof is a straightforward computation. We have that:
\[
\lambda_{\rm H} = \frac{ z/E_D  - (q-q^{-1})}{z/E_D} = \frac{z - (q-q^{-1}) E_D} {z} = \lambda_D
\]
\end{proof}

Next, the specialized trace ${\rm tr}_{d,D}$ has to be compared to the Ocneanu trace $\tau$. Indeed, in \cite[Proposition~5.6]{ChJuKaLa} it was proved that for the case of braids that close to knots, the trace functions ${\rm tr}_{d,D}$ and $\tau$ are connected by the following relation:
\begin{equation}\label{trmultocn}
{\rm tr}_{d,D} ( \alpha) = E_D^{n-1} \tau(\alpha).
\end{equation}
So, by choosing the trace parameter of $\tau$ to be $z/E_D$ and by utilizing Proposition~\ref{tranfprop} and equation~\eqref{trmultocn}, one obtains the following:

 \begin{theorem}[{\cite[Conjecture]{ChmJaKaLa} and \cite[Theorem~5.8]{ChJuKaLa}}]\label{thmcoinchom1}
Let $X_D$ be a solution of the ${\rm E}$-system.
For any $\alpha \in B_n$ such that $\widehat{\alpha}$ is a knot,
\[
\Theta_d(q,z)(\widehat{\alpha}) = \Theta_1 (q, z/ E_D) (\widehat{\alpha}) = P(q, z /E_D) (\widehat{\alpha}).
\]

\end{theorem}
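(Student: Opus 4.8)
The plan is to reduce the statement to the two ingredients already in hand: the factorization of the specialized trace recorded in equation~\eqref{trmultocn} and the parameter correspondence of Proposition~\ref{tranfprop}. Fix $\alpha \in B_n$ whose closure $\widehat{\alpha}$ is a knot, equivalently $\mu(\alpha)$ is an $n$-cycle in $S_n$. First I would substitute \eqref{trmultocn}, namely ${\rm tr}_{d,D}(\delta(\alpha)) = E_D^{n-1}\,\tau(\pi(\alpha))$, directly into the defining formula~\eqref{thinv} for $\Theta_{d,D}$. The factor $E_D^{n-1}$ produced by this substitution cancels precisely against the $E_D^{-(n-1)}$ contained in the normalization factor $\bigl(\tfrac{1-\lambda_D}{\sqrt{\lambda_D}(q-q^{-1})E_D}\bigr)^{n-1}$, leaving
\[
\Theta_{d,D}(q,\lambda_D)(\widehat{\alpha}) = \left( \frac{1-\lambda_D}{\sqrt{\lambda_D}\,(q-q^{-1})} \right)^{n-1} \left( \sqrt{\lambda_D} \right)^{\varepsilon(\alpha)} \tau(\pi(\alpha)),
\]
the exponent sum $\varepsilon(\alpha)$ being the same on both sides since $\delta$ and $\pi$ send $\sigma_i$ to the respective braiding generator.

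Next I would invoke Proposition~\ref{tranfprop}: taking the trace parameter of the Ocneanu trace $\tau$ to be $z/E_D$ makes $\lambda_{\rm H} = \lambda_D$. With this choice the right-hand side above is literally the defining expression~\eqref{xinv} of the Homflypt polynomial $P(q,\lambda_{\rm H})$ evaluated at $\widehat{\alpha}$; since $\lambda_{\rm H}$ is then a function of the trace parameter $z/E_D$, this is $P(q, z/E_D)(\widehat{\alpha})$. This yields $\Theta_d(q,z)(\widehat{\alpha}) = P(q, z/E_D)(\widehat{\alpha})$, recalling that for $D = \mathbb{Z}/d\mathbb{Z}$ we have set $\Theta_d := \Theta_{d,D}$. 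The middle equality $\Theta_1(q, z/E_D)(\widehat{\alpha}) = P(q, z/E_D)(\widehat{\alpha})$ is then just the same computation specialized to $d=1$: there the algebras ${\rm Y}_{1,n}(q)$ and ${\rm H}_n(q)$ coincide, the traces ${\rm tr}_{1,\{0\}}$ and $\tau$ coincide, $E_{\{0\}}=1$, and \eqref{thinv} becomes \eqref{xinv} with $z/E_D$ in place of $z$.

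The genuine content, and the step I would regard as the main obstacle if one had to build the argument from scratch rather than cite it, is equation~\eqref{trmultocn} itself: the claim that on braids closing to knots the specialized trace is exactly $E_D^{n-1}$ times the Ocneanu trace. The delicate point is that ${\rm tr}_{d,D}$ obeys the extra rule ${\rm tr}_{d,D}(a e_n) = E_D\,{\rm tr}_{d,D}(a)$, with no counterpart for $\tau$, together with the modified Markov rule ${\rm tr}_{d,D}(a e_n g_n) = z\,{\rm tr}_{d,D}(a)$; one must check by induction on $n$, using the inductive basis~\eqref{yhbasel} and crucially the hypothesis that the permutation underlying $\alpha$ is a single $n$-cycle, that every idempotent $e_n$ (or factor $e_n g_n$) created while reducing $\delta(\alpha)$ to the basis is accompanied by exactly one power of $E_D$, so the total accumulated factor is precisely $E_D^{n-1}$. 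Once \eqref{trmultocn} is granted, as it is in the excerpt, the proof is the short computation above.
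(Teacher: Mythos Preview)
Your proof is correct and follows exactly the approach sketched in the paper: substitute the trace identity~\eqref{trmultocn} into the defining formula~\eqref{thinv}, cancel the factor $E_D^{\,n-1}$ against the normalization, and then invoke Proposition~\ref{tranfprop} to identify the result with $P(q,z/E_D)$. The paper presents precisely this argument in the paragraph preceding the theorem statement, citing \eqref{trmultocn} and Proposition~\ref{tranfprop} as the two inputs.
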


\subsection{{\it The invariants $\Theta_d$ on classical links}} We shall study now the behaviour of $\Theta_d$ on braids whose closure is a link with at least two components. If the link $L$ is split, that is $L = L_1 \sqcup L_2 \sqcup \ldots \sqcup L_m$, where $L_1, \ldots , L_m$ are links, by the multiplicative property of the invariants $\Theta_d$ we have that \cite[Proposition~3.3]{ChmJaKaLa}:
\[
\Theta_d(L) = \left (  \frac{1 - \lambda_D}{\sqrt{\lambda_D} (q-q^{-1})E_D} \right)^{m-1} \Theta_d ( L_1) \ldots \Theta_d(L_m) .
\]
Thus, one needs to examine only non-split links. For links that are disjoint unions of $k$ knots, which is a special case of a split link, an analogous relation to equation~\ref{trmultocn} holds:
\begin{equation}\label{trmultocngen}
{\rm tr}_{d,D} (\alpha) = E_D^{n-k} \tau (\alpha) ,
\end{equation}
which leads to the following result:
\begin{theorem}[{\cite[Theorem~6.2]{ChJuKaLa}}]\label{thmcoinchom2}
For any $\alpha \in B_n$ such that $\widehat{\alpha}$ is a disjoint union of $k$ knots,
\[
\Theta_d(q,z)(\widehat{\alpha}) = E_D^{1-k} \Theta_1 (q, z/ E_D) (\widehat{\alpha}) = E_D^{1-k}  P(q, z /E_D) (\widehat{\alpha}).
\]
\end{theorem}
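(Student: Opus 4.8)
The plan is to deduce the statement directly from equation~\eqref{trmultocngen} together with Proposition~\ref{tranfprop}, in close analogy with the proof of Theorem~\ref{thmcoinchom1}. First I would fix $\alpha \in B_n$ whose closure $\widehat{\alpha}$ is a disjoint union of $k$ knots, and recall that $\Theta_d = \Theta_{d,D}$ is defined via \eqref{thinv} using the specialized trace ${\rm tr}_{d,D}$ with parameter $z$, while $\Theta_1(q,z/E_D) = P(q,\lambda_{\rm H})$ is defined via \eqref{xinv} using the Ocneanu trace $\tau$ with parameter $z/E_D$ and with $\lambda_{\rm H} = \lambda_D$ by Proposition~\ref{tranfprop}. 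Both invariants are computed from the same braid $\alpha$, so the writhe-type exponent $\varepsilon(\alpha)$ and the strand number $n$ are the same on both sides; only the trace values and the normalization prefactors differ.

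Next I would substitute \eqref{trmultocngen}, namely ${\rm tr}_{d,D}(\delta(\alpha)) = E_D^{\,n-k}\,\tau(\pi(\alpha))$, into the definition \eqref{thinv} of $\Theta_d(q,z)(\widehat{\alpha})$. This turns the right-hand side into
\[
\left( \frac{1-\lambda_D}{\sqrt{\lambda_D}\,(q-q^{-1})E_D} \right)^{n-1} \left( \sqrt{\lambda_D} \right)^{\varepsilon(\alpha)} E_D^{\,n-k}\, \tau(\pi(\alpha)).
\]
Since $\lambda_D = \lambda_{\rm H}$ for the Ocneanu parameter $z/E_D$, the factor $\left(\sqrt{\lambda_D}\right)^{\varepsilon(\alpha)}$ already matches the corresponding factor in $P(q,\lambda_{\rm H})(\widehat{\alpha})$. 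It remains to compare the two normalization prefactors: the one above carries an extra $E_D^{-(n-1)}$ from the denominator and an extra $E_D^{\,n-k}$ from \eqref{trmultocngen}, whose product is $E_D^{\,1-k}$. Collecting these, the right-hand side equals $E_D^{\,1-k}$ times
\[
\left( \frac{1-\lambda_{\rm H}}{\sqrt{\lambda_{\rm H}}\,(q-q^{-1})} \right)^{n-1} \left( \sqrt{\lambda_{\rm H}} \right)^{\varepsilon(\alpha)} \tau(\pi(\alpha)) = P(q, z/E_D)(\widehat{\alpha}) = \Theta_1(q, z/E_D)(\widehat{\alpha}),
\]
which is exactly the claimed identity. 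The only genuine input beyond bookkeeping is equation~\eqref{trmultocngen}; I would either invoke it as the cited \cite[Theorem~6.2]{ChJuKaLa} ingredient or, if a self-contained argument is wanted, prove it by induction on the number of strands, peeling off one knot component at a time using the inductive basis \eqref{yhbasel} and the trace rules of Theorem~\ref{sptrwei}, with each completed knot component contributing a factor $E_D^{\,(\text{strands in that component})-1}$ and these exponents summing to $n-k$.

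The main obstacle is establishing \eqref{trmultocngen} itself, i.e.\ showing that for a braid closing to a disjoint union of $k$ knots the specialized trace factors as $E_D^{\,n-k}$ times the Ocneanu trace; this is where the combinatorial structure of the permutation $\mu(\alpha)$ (having exactly $k$ cycles, each an $n_j$-cycle) enters, via the analysis in \cite{ChJuKaLa} of how the idempotents $e_i$ are consumed by rule $(iv)$ of Theorem~\ref{sptrwei}. Once \eqref{trmultocngen} and Proposition~\ref{tranfprop} are in hand, the rest is the short substitution and prefactor computation sketched above, entirely parallel to the knot case $k=1$ treated in Theorem~\ref{thmcoinchom1}.
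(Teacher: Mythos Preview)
Your proposal is correct and follows exactly the route indicated in the paper: the paper simply states equation~\eqref{trmultocngen} and remarks that it ``leads to'' Theorem~\ref{thmcoinchom2}, and you have filled in precisely that substitution-and-prefactor computation, together with the use of Proposition~\ref{tranfprop} to identify $\lambda_D$ with $\lambda_{\rm H}$ at parameter $z/E_D$. The only nontrivial ingredient is indeed \eqref{trmultocngen}, which the paper (and you) cite from \cite{ChJuKaLa}.
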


However, this result does not hold for the general case of links. For example, using the transformation $ z \mapsto z/ E_D$, we have the following for the Hopf link, $H=\widehat{\sigma_1^2}$:
\[
\tau(\sigma_1^2) = 1 + (q-q^{-1}) z/E_D
\]
and
\[ 
{\rm tr}_{d,D} ( \sigma_1^2) = {\rm tr}_{d,D} (1+ (q-q^{-1})e_1g_1) = 1+ (q-q^{-1})z = 1 -E_D + E_D\, \tau(\sigma_1^2)
\]
and thus
\[ 
\Theta_d (q,z) (H) \neq P(q, z/E_D) (H).
\]
The general case was studied in \cite{ChJuKaLa} with the use of a special skein relation for the invariants $\Theta_d$, which can only be applied on crossings between different components of a link. This skein relation for the invariants $\Theta_d$ was found via the invariants $\Phi_{d,D}$. More precisely, recall the skein relation \eqref{phiskein}:
\[
\frac{1}{\sqrt{\lambda_D}}\, \Phi_{d,D} (L_+) - \sqrt{\lambda_D} \, \Phi_{d,D} (L_-) = \frac{(q-q^{-1} )}{d} \sum_{s=0}^{d-1} \Phi_{d,D}(L_s) ,
\]
 where, for $\beta \in \mathcal{F}_n$, $L_+ = \widehat{\beta \sigma_i}$ , $L_{-} = \widehat{\beta \sigma_i^{-1}}$ and $L_s = \widehat{\beta t_i^{s} t_{i+1}^{-s}}$. If the strands $i$ and $i+1$ of $L_+$ belong to different components at the region of the crossing, then these strands must belong to the same component of $\beta$ and of $L_s$, otherwise the application of $\sigma_i$ would transfer them to the same component of $L_+$. As a consequence, in $L_s$ the framing of the $i$-th strand is added to the framing $(i+1)$-st and, since they add up to zero, the link $L_s$ can also be represented by the braid $\beta$. Namely, we have that:
\[
\frac{1}{d} \sum_{s=0}^{d-1} \Phi_{d,D}(L_s) = \frac{1}{d} \sum_{s=0}^{d-1} \Phi_{d,D}(\widehat{\beta}) =  \Phi_{d,D}(\widehat{\beta}) =  \Phi_{d,D}(L_0).
\]
Hence the skein relation~\eqref{phiskein} reduces to the following:
\[
\frac{1}{\sqrt{\lambda_D}} \, \Phi_{d,D}(L_+) - \sqrt{\lambda_D}\, \Phi_{d,D} (L_{-}) = (q-q^{-1})\,  \Phi_{d,D}(L_0).
\]
Restricting now to the case of classical links we obtain from the above the following:
\begin{theorem}[{\cite[Proposition~6.8]{ChJuKaLa}}]\label{thetaskeinthm}
The following skein relation holds for $\Theta_d$ only on crossings between different components:
\begin{equation}\label{thetaskein}
\frac{1}{\sqrt{\lambda_D}} \, \Theta_d(L_+) - \sqrt{\lambda_D}\, \Theta_d (L_{-}) = (q-q^{-1})\,  \Theta_d(L_0).
\end{equation}
\end{theorem}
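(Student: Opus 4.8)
The plan is to deduce the classical skein relation \eqref{thetaskein} from the framed skein relation \eqref{phiskein} by restricting attention to a crossing between two distinct components of a classical link. First I would set up the local picture: given a classical link $L$ presented as the closure $\widehat{\alpha}$ of a braid $\alpha \in B_n$, I fix a crossing involving strands $i$ and $i+1$ that, at that crossing, belong to different components of the link. I would write $\alpha = \beta \sigma_i^{\pm 1}$ (up to conjugation, which does not affect the invariant), so that $L_+ = \widehat{\beta \sigma_i}$, $L_- = \widehat{\beta \sigma_i^{-1}}$, and $L_s = \widehat{\beta t_i^{s} t_{i+1}^{-s}}$ form the local triple appearing in \eqref{phiskein}, where now $\beta$ is regarded as a framed braid with all framings zero, i.e.\ as an element of $B_n \subset \mathcal{F}_n$.

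The key combinatorial observation, which is the heart of the argument, is that the components of $L_+$ to which strands $i$ and $i+1$ belong are \emph{merged} by the extra crossing $\sigma_i$; hence in $\widehat{\beta}$, and therefore in each $L_s = \widehat{\beta t_i^s t_{i+1}^{-s}}$, strands $i$ and $i+1$ lie on the \emph{same} component. (If they were on different components of $\widehat{\beta}$, then $\sigma_i$ could not have joined the two components of $L_+$ into one at that crossing.) Consequently, in the closure $\widehat{\beta t_i^s t_{i+1}^{-s}}$ the framing $s$ on strand $i$ and the framing $-s$ on strand $i+1$ travel around the \emph{same} component, where they cancel: the total framing on that component is $s + (-s) = 0$. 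Therefore $L_s$ is, as a framed link, isotopic to $\widehat{\beta}$ for every $s = 0, 1, \ldots, d-1$, and in particular $L_0 = \widehat{\beta}$. Using that $\Phi_{d,D}$ is an invariant of framed links, this gives
\[
\frac{1}{d} \sum_{s=0}^{d-1} \Phi_{d,D}(L_s) = \frac{1}{d} \sum_{s=0}^{d-1} \Phi_{d,D}(\widehat{\beta}) = \Phi_{d,D}(\widehat{\beta}) = \Phi_{d,D}(L_0),
\]
so the right-hand side of \eqref{phiskein} collapses to $(q-q^{-1})\,\Phi_{d,D}(L_0)$.

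Finally, I would invoke that the classical-link invariant $\Theta_{d,D}$ is exactly $\Phi_{d,D}$ restricted to classical links (framed links with all framings zero): since $L_+$, $L_-$, and $L_0 = \widehat{\beta}$ are all classical links — $L_0$ because we have just shown the framings cancel — equation \eqref{phiskein}, after the above reduction, reads verbatim as
\[
\frac{1}{\sqrt{\lambda_D}}\, \Theta_d(L_+) - \sqrt{\lambda_D}\, \Theta_d(L_-) = (q-q^{-1})\, \Theta_d(L_0),
\]
which is \eqref{thetaskein}. The one subtlety I would be careful about — the main obstacle — is the component-tracking argument: I must make precise that "strands $i$ and $i+1$ belong to different components of $L_+$ at the crossing" really does force them onto one component of $\widehat{\beta}$, which amounts to analyzing how the permutation underlying $\beta$ versus $\beta\sigma_i$ partitions the strands into orbits (cycles of $\mu(\beta\sigma_i)$ versus $\mu(\beta)$), and then checking that the cancellation of framings $t_i^s t_{i+1}^{-s}$ around a single component is genuinely a framed isotopy and not merely an equality of underlying links. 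Everything else is a direct substitution into the already-established framed skein relation \eqref{phiskein}.
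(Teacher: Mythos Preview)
Your proposal is correct and follows essentially the same route as the paper: start from the framed skein relation \eqref{phiskein}, observe that if strands $i$ and $i+1$ lie on different components of $L_+$ then they lie on the same component of $\widehat{\beta}$ (so the framings $s$ and $-s$ in $L_s$ cancel and each $L_s$ is framed-isotopic to $L_0=\widehat{\beta}$), collapse the sum to $(q-q^{-1})\,\Phi_{d,D}(L_0)$, and restrict to classical links to get \eqref{thetaskein}. The only difference is cosmetic: the paper phrases the component-tracking argument more tersely, while you spell out the cycle-structure reasoning via $\mu(\beta)$ versus $\mu(\beta\sigma_i)$, which is a good way to make that step rigorous.
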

So, in order to compute the invariant $\Theta_d$ on an $\ell$-component link, we apply the skein relation~\eqref{thetaskein} on $L$ and we unlink its components one-by-one. At the end of this procedure we obtain a disjoint union of knots with up to $\ell$ components. Thus, $\Theta_d(L)$ is written as a linear combination of values of $\Theta_d$ on the disjoint union of links. For $k=1, \ldots , \ell$, let $\mathcal{N}(L)_k$ denote the set of all disjoint unions of $k$ knots appearing in this linear combination and let $\mathcal{A}:=\mathbb{Q}[q^{\pm1}, \sqrt{\lambda_D}^{\pm1}]$. We then have the following result:

\begin{theorem}[{\cite[Theorem~6.16]{ChJuKaLa}}]\label{thmellink}
For any $\ell$-component link $L$, the value $\Theta_d(L)$ is an $\mathcal{A}$-linear combination of $P(L)$ and the values of $P$ on disjoint unions of knots obtained by the skein relation:
\[
\Theta_d(L) = \sum_{k=1}^{\ell} E_D^{1-k} \sum_{\widehat{\alpha} \in \mathcal{N}(L)_k} c(\widehat{\alpha}) P (\widehat{\alpha}) = P(L) + \sum_{k=2}^{\ell} \left(E_D^{1-k} - 1 \right) \sum_{\widehat{\alpha}\in \mathcal{N}(L)_k} c(\widehat{\alpha}) P (\widehat{\alpha}).
\]
\end{theorem}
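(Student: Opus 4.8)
The plan is to argue by induction on the number of components $\ell$, using the special skein relation \eqref{thetaskein} of Theorem~\ref{thetaskeinthm} as the only device that reduces $L$, turning it eventually into disjoint unions of knots, and then passing from $\Theta_d$ to $P$ by Theorems~\ref{thmcoinchom1} and \ref{thmcoinchom2}. The base case $\ell=1$ is exactly Theorem~\ref{thmcoinchom1}: $\Theta_d(L)=P(L)$, which corresponds to $\mathcal{N}(L)_1=\{L\}$ and $c(L)=1$.

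For the inductive step I would first set up a \emph{reduction procedure}. Fix a diagram of $L$ and an ordering $L_1,\dots,L_\ell$ of its components. By finitely many crossing changes performed \emph{only at crossings between two distinct components} --- at each such crossing make the lower-indexed component the overstrand --- the diagram becomes one representing the split disjoint union $\widehat{L_1}\sqcup\cdots\sqcup\widehat{L_\ell}$ of $\ell$ knots. Each elementary step $L'\rightsquigarrow \overline{L'}$ of this sequence is a crossing change at a different-component crossing, so Theorem~\ref{thetaskeinthm} applies; solving \eqref{thetaskein} for the summand corresponding to the current diagram gives
\[
\Theta_d(L')=\lambda_D^{\pm1}\,\Theta_d(\overline{L'})+\mu\,(q-q^{-1})\,\Theta_d(L'_0),\qquad \mu\in\mathcal{A},
\]
where $L'_0$ is the oriented smoothing of that crossing, hence an $(\ell-1)$-component link. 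Iterating on the $\Theta_d(\overline{L'})$ summand (whose number of components stays $\ell$) until the split union of $\ell$ knots is reached, and recursing on every $(\ell-1)$-component smoothing $L'_0$ by the inductive hypothesis, one writes
\[
\Theta_d(L)=\sum_{k=1}^{\ell}\ \sum_{\widehat{\alpha}\in\mathcal{N}(L)_k} c(\widehat{\alpha})\,\Theta_d(\widehat{\alpha}),
\]
a finite $\mathcal{A}$-linear combination of values of $\Theta_d$ on disjoint unions of $k$ knots, $1\le k\le \ell$; termination holds because along every branch either the component count strictly drops (smoothing) or the number of ``wrongly ordered'' different-component crossings strictly drops (crossing change).

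The key point is that the \emph{same} recursion tree computes the Homflypt polynomial. Indeed, by Proposition~\ref{tranfprop} the invariant $P=P(q,z/E_D)$ is $P(q,\lambda_D)$, which satisfies $\frac{1}{\sqrt{\lambda_D}}P(L_+)-\sqrt{\lambda_D}P(L_-)=(q-q^{-1})P(L_0)$ at \emph{every} crossing --- in particular at the different-component crossings used above --- with the same oriented smoothing $L_0$, and the component structure evolves identically (a crossing change preserves it; smoothing a different-component crossing merges those two components). Hence running the identical sequence of resolutions yields
\[
P(L)=\sum_{k=1}^{\ell}\ \sum_{\widehat{\alpha}\in\mathcal{N}(L)_k} c(\widehat{\alpha})\,P(\widehat{\alpha})
\]
with the \emph{same} sets $\mathcal{N}(L)_k$ and the \emph{same} coefficients $c(\widehat{\alpha})\in\mathcal{A}$. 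Finally, Theorem~\ref{thmcoinchom2} gives $\Theta_d(\widehat{\alpha})=E_D^{\,1-k}\,P(\widehat{\alpha})$ whenever $\widehat{\alpha}$ is a disjoint union of $k$ knots; substituting this into the expression for $\Theta_d(L)$ produces the first displayed equality of the statement, and subtracting the expression for $P(L)$ (whose $k=1$ contribution cancels, since $E_D^{0}-1=0$) produces the second.

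The main obstacle I anticipate is making this reduction genuinely rigorous: proving that different-component crossing changes alone split any link into a disjoint union of knots, that the induced recursion terminates, and --- crucially for the precise formula --- that the reductions for $\Theta_d$ and for $P$ can be carried out in lockstep so as to produce the identical data $\mathcal{N}(L)_k$ and $c(\widehat{\alpha})$. That matching is exactly what forces all the discrepancy between $\Theta_d$ and $P$ to reside in the scalars $E_D^{1-k}$ supplied by Theorem~\ref{thmcoinchom2}.
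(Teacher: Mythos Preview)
Your proposal is correct and follows essentially the same approach as the paper: apply the special skein relation at crossings between distinct components to unlink $L$ into disjoint unions of knots, observe that the identical resolution tree computes $P$ with the same coefficients $c(\widehat{\alpha})$, and then invoke Theorems~\ref{thmcoinchom1} and \ref{thmcoinchom2} to replace $\Theta_d(\widehat{\alpha})$ by $E_D^{1-k}P(\widehat{\alpha})$. The paper, being a survey, only sketches this argument in the paragraph preceding the statement and refers to \cite{ChJuKaLa} for the details; your write-up in fact supplies more of those details (termination, the parallel recursion for $P$, the derivation of the second equality) than the present paper does.
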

In \cite{ChJuKaLa} 89 pairs of links that are equivalent through the Homflypt polynomial are considered. We shall call such pairs of links {\it $P$-equivalent}. These pairs are different links even if they are considered as unoriented links. Out of these 89 $P$-equivalent pairs of links, 83 were still equivalent through the $\Theta_d$ invariants, for generic $d$. We shall call such pairs {\it $\Theta_d$-equivalent}. However,  six pairs of 3-component $P$-equivalent links were found that are not $\Theta_d$-equivalent, for every $d$. For these pairs the authors of \cite{ChJuKaLa} computed the differences of the polynomials:
{\small \begin{align*}
&\Theta_d(L11n358\{0,1\})-\Theta_d(L11n418\{0,0\}) = \frac{(E_D-1) (\lambda_D -1) (q-1)^2 (q+1)^2 \left(q^2-\lambda_D \right)
   \left(\lambda_D  q^2-1\right)}{E_D \lambda_D^4 q^4}
\\
&\Theta_d(L11a467\{0,1\})-\Theta_d(L11a527\{0,0\}) = \frac{(E_D-1) (\lambda_D -1) (q-1)^2 (q+1)^2 \left(q^2-\lambda_D \right)
   \left(\lambda_D q^2-1\right)}{E_D \lambda_D^4 q^4}
\\
&\Theta_d(L11n325\{1,1\})-\Theta_d(L11n424\{0,0\}) = -\frac{(E_D-1) (\lambda_D -1) (q-1)^2 (q+1)^2 \left(q^2-\lambda_D \right)
   \left(\lambda_D q^2-1\right)}{E_D \lambda_D ^3 q^4}
\\
&\Theta_d(L10n79\{1,1\})-\Theta_d(L10n95\{1,0\}) = \frac{(E_D-1) (\lambda_D -1) (q-1)^2 (q+1)^2 \left(\lambda_D +\lambda_D
   q^4+\lambda_D  q^2-q^2\right)}{E_D \lambda_D^4 q^4}
\\
&\Theta_d(L11a404\{1,1\})-\Theta_d(L11a428\{0,1\}) = \frac{(E_D-1) (\lambda_D -1) (\lambda_D+1) (q-1)^2 (q+1)^2
   \left(q^4-\lambda_D  q^2+1\right)}{E_D q^4}
\\  
&\Theta_d(L10n76\{1,1\})-\Theta_d(L11n425\{1,0\}) =  \frac{(E_D-1) (\lambda_D -1) (\lambda_D+1) (q-1)^2 (q+1)^2}{E_D \lambda_D^3 q^2}.
\end{align*}}

\noindent Note that the factor $(E_D -1)$ that is common in all six pairs suggests that the pairs have the same Homflypt polynomial, since for $E_D=1$ the difference collapses to zero. The above analysis leads to the following exciting result:
\begin{theorem}[{\cite[Theorem 7.1]{ChJuKaLa}}]\label{notopoeq}
The invariants $\Theta_d$ are not topologically equivalent to the Homflypt polynomial for any $d\geq 2$.
\end{theorem}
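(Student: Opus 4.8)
The plan is to exhibit explicit pairs of links that have the same Homflypt polynomial but are separated by $\Theta_d$, for every $d \geq 2$, so that $\Theta_d$ is a genuinely different invariant and in particular cannot be topologically equivalent to $P$. The six pairs of $3$-component links listed just above the statement are the natural candidates, so the argument reduces to two verifications: (i) each pair is $P$-equivalent, and (ii) for each pair the displayed difference $\Theta_d(L) - \Theta_d(L')$ is a nonzero element of $\mathcal{A} = \mathbb{Q}[q^{\pm 1}, \sqrt{\lambda_D}^{\pm 1}]$ whenever $E_D \neq 1$, i.e. whenever $d \geq 2$ (recall $E_D = 1/d$).

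First I would invoke Theorem~\ref{thmellink}: for an $\ell$-component link $L$, $\Theta_d(L) = P(L) + \sum_{k=2}^{\ell}(E_D^{1-k}-1)\sum_{\widehat{\alpha} \in \mathcal{N}(L)_k} c(\widehat{\alpha}) P(\widehat{\alpha})$. Applying this to a $P$-equivalent pair $(L, L')$, the leading $P(L) = P(L')$ terms cancel, so the difference $\Theta_d(L) - \Theta_d(L')$ is supported entirely on the $k \geq 2$ correction terms and is automatically divisible by factors of the form $E_D^{1-k}-1$; this is exactly why $(E_D-1)$ appears as a common factor in all six displayed expressions, and it confirms that the difference vanishes identically when $E_D = 1$. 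The substantive content is then that for each of the six pairs the difference does \emph{not} vanish for $E_D = 1/d$ with $d \geq 2$: inspecting the six formulas, each is $(E_D - 1)$ times a Laurent polynomial in $q$ and $\lambda_D$ whose remaining factors — things like $(\lambda_D - 1)$, $(q-1)^2(q+1)^2$, $(q^2 - \lambda_D)$, $(\lambda_D q^2 - 1)$, $(\lambda_D + 1)$, $(q^4 - \lambda_D q^2 + 1)$, $(\lambda_D + \lambda_D q^4 + \lambda_D q^2 - q^2)$ — are not the zero polynomial and do not acquire a zero under the specialization that makes $\Theta_d$ well-defined (here $\lambda_D = (z - (q-q^{-1})E_D)/z$ with $z$ still an indeterminate, so $\lambda_D$ is transcendental over $\mathbb{Q}(q)$ and none of these factors can vanish). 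Hence each difference is a nonzero element of $\mathcal{A}$, so $\Theta_d$ separates each pair while $P$ does not.

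To close the argument one needs (i) the $P$-equivalence of the six pairs and (ii) the correctness of the six difference formulas. Both are computations carried out in \cite{ChJuKaLa} using the machinery already developed: the $P$-equivalence is read off from tables of Homflypt polynomials (or from the fact, noted after the formulas, that $(E_D-1)$ divides each difference, forcing equal Homflypt polynomials), and the difference formulas follow from Theorem~\ref{thmellink} together with a direct evaluation of $\Theta_d$ via the specialized trace ${\rm tr}_{d,D}$ on braid words representing the listed links. Since these are the $P$-equivalent but not $\Theta_d$-equivalent pairs, their existence establishes that $\Theta_d$ distinguishes pairs of links that $P$ cannot distinguish, which is precisely the assertion that $\Theta_d$ is not topologically equivalent to the Homflypt polynomial; and because the separating factor is $(E_D - 1) = (1/d - 1)$, the conclusion holds for every $d \geq 2$.

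The main obstacle, and the only place real work is hidden, is the explicit computation of $\Theta_d$ on these specific $3$-component links — i.e. writing each link as a closed braid, expanding ${\rm tr}_{d,D}$ on the braid word via the trace rules of Theorem~\ref{sptrwei} (or applying the skein relation of Theorem~\ref{thetaskeinthm} to unlink components and reduce to disjoint unions of knots handled by Theorem~\ref{thmcoinchom2}), and checking that the resulting Laurent polynomials differ. This is a finite but nontrivial symbolic computation, performed with computer algebra in \cite{ChJuKaLa, ka}; conceptually, though, once Theorem~\ref{thmellink} is in hand the only thing that remains is to confirm that at least one $P$-equivalent pair yields a nonzero correction term, and the six explicit formulas do exactly that.
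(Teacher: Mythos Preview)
Your proposal is correct and follows essentially the same approach as the paper: the paper's argument for this theorem consists precisely of exhibiting the six $P$-equivalent pairs of $3$-component links together with the explicit difference formulas $\Theta_d(L)-\Theta_d(L')$ (computed in \cite{ChJuKaLa, ka}), observing that each carries the factor $(E_D-1)$ and that the remaining factor is a nonzero element of $\mathcal{A}$, so that the differences vanish only for $d=1$. Your added remark that $\lambda_D$ is transcendental over $\mathbb{Q}(q)$ (since $z$ is an indeterminate) is a clean way to justify that none of the cofactors can vanish, which the paper leaves implicit.
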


\begin{remark}\rm
 It is worth adding here that the invariants $\Theta_d$ are not topologically equivalent to the Kauffman polynomial \cite{kau}, since there is at least one pair of knots which are distinguished by the Homflypt polynomial but not by the Kauffman polynomial.
 \end{remark}

\begin{remark} \label{genthetarem}\rm
In \cite[Section 8]{ChJuKaLa} the family of invariants $\left \{ \Theta_{d} \right \}$ has been generalized to a new 3-variable invariant $\Theta(q,\, \lambda, \, E)$, for $E\in \mathbb{C}$. In particular, $\Theta$ specializes to the Homflypt polynomial for $E=1$ and is stronger than the Homflypt polynomial. The invariant $\Theta$ satisfies also the special skein relation of the invariants $\Theta_d$ and, alike $\Theta_d$, can be completely defined using just the special skein relation and its values on disjoint unions of knots \cite{ChJuKaLa}. A diagrammatic proof of the well-definedness of $\Theta$ is given in \cite{kaula}. Finally, for properties of the invariants $\Theta_d$ and $\Theta$ the reader is referred to \cite{ChmJaKaLa, ChJuKaLa}
\end{remark}

\section{Framization of the Temperley-Lieb algebra of type $A$}\label{frtl}
In this section we first present the framization of the Temperley-Lieb algebra, ${\rm FTL}_{d,n}$, as one of the possible quotients of the Yokonuma-Hecke algebra. We then recall the necessary and sufficient conditions for the Markov trace ${\rm tr}_d$ to pass through to the quotient algebra ${\rm FTL}_{d,n}$ and we give the related link invariants.
\subsection{{\it The first attempts}}The definition of a framization of the Temperley-Lieb algebra using the techniques of \cite{jo, jula} has been an open problem for some time. Our first attempt was in \cite{gojukola} where the {\it Yokonuma-Temperley-Lieb algebra} was defined, for $n \geq 3$, as a quotient of the algebra ${\rm Y}_{d,n}(u)$ (recall Remark~\ref{oldquadr}) over the two-sided ideal generated by the same expression as in the classical case, that is:
\[
{\rm YTL}_{d,n}(u) := \frac{{\rm Y}_{d,n}(u)} { \langle 1+ g_1 +g_2 + g_1 g_2 + g_2 g_1 + g_1 g_2 g_1 \rangle} .
\]

 The necessary and sufficient conditions so that the trace ${\rm tr}_d$ on ${\rm Y}_{d,n}(u)$ passes to the quotient ${\rm YTL}_{d,n}(u)$ proved to be too restrictive \cite[Theorem~6]{gojukola}. Namely, only trivial solutions of the ${\rm E}$-system would qualify, and, as a consequence, the resulting invariants for framed knots and links were not topologically interesting. More precisely, basic pairs of framed links were not distinguished. However, for the case of classical links we were able to recover the Jones polynomial.
 
The second attempt was made in \cite{gojukola2} with {\it the Complex Reflection Temperley-Lieb} algebra. This time we defined the quotient algebra with a two-sided ideal of ${\rm Y}_{d,n}(u)$ that was analogous to the classical case. To be more precise, we return to the discussion of Section~\ref{prelim} and we consider the Iwahori-Hecke algebra as a $u$-deformation of $\mathbb{C}S_n$. We note also that the underlying group of the defining ideal of  the algebra ${\rm TL}_n(u)$ is $S_3$. In this context,  the Yokonuma-Hecke algebra is seen as a $u$-deformation of $\mathbb{C}C_{d,n}$ and so we consider, for the definition of the quotient algebra, the two-sided ideal whose underlying group is $C_{d,3}$, which is completely analogous to the classical case. Thus, for $n \geq 3$, we define the following quotient of the algebra ${\rm Y}_{d,n}(u)$:
\[
{\rm CTL}_{d,n}(u) := \frac{{\rm Y}_{d,n}(u)} { \left \langle \displaystyle \sum_{a,\, b,\,c \,\in \mathbb{Z}/d\mathbb{Z}} t_1^a \, t_2^b \, t_3 ^c (1+ g_1 +g_2 + g_1 g_2 + g_2 g_1 + g_1 g_2 g_1) \right \rangle} .
\]
\begin{remark} \rm
The denomination Complex Reflection Temperley-Lieb algebra has to do with the fact that the underlying group of ${\rm CTL}_{d,n}(u)$ is isomorphic to the complex reflection group $G(d,1,3)$.
\end{remark}
The necessary and sufficient conditions so that the trace ${\rm tr}_d$ passes to the quotient algebra ${\rm CTL}_{d,n}(u)$ proved to be too relaxed, especially on the trace parameters $x_i$ \cite[Theorem~7]{gojukola2}. So, in order to define link invariants from the algebras ${\rm CTL}_{d,n}(u)$, the ${\rm E}$-condition must be imposed on the $x_i$'s.  Further, as we showed in \cite[Proposition~10]{gojukola2}, the invariants that are derived from ${\rm CTL}_{d,n}(u)$ coincide either with those from ${\rm Y}_{d,n}(u)$ or with those from another quotient of ${\rm Y}_{d,n}(u)$ that will be discussed next. The algebra ${\rm CTL}_{d,n}(u)$ proved to be unnecessarily large for our topological purposes. Indeed, it is the largest one of the three quotients of ${\rm Y}_{d,n}(u)$ in discussion and, since we do not obtain any extra invariants,  it is discarded as a possible framization of the Temperley-Lieb algebra.

\subsection{{\it The Framization of the Temperley-Lieb algebra}} The discussion above indicated that the desired framization of the Temperley-Lieb algebra, for our topological purposes, could be an intermediate algebra between the quotient algebras ${\rm YTL}_{d,n}(u)$ and ${\rm CTL}_{d,n}(u)$. We achieve this, by using for the defining ideal an intermediate subgroup that lies between  $S_3 $ and $C_{d,3}$. Indeed, by considering the following subgroup of $C_{d,3}$ (see \cite[Section~4.2]{gojukola2}):
\[
H_{d,3} := \langle t_1 t_{2}^{-1}, t_{2}t_{3}^{-1} \rangle \rtimes S_3 ,
\]
we define:
\begin{definition}[{\cite[Definition~5]{gojukola2}}]\label{ftldefine}\rm
For $n \geq 3$, the {\it Framization of the Temperley-Lieb algebra}, denoted  ${\rm FTL}_{d,n}(u)$, is defined as:
\[
{\rm FTL}_{d,n}(u) := \frac{{\rm Y}_{d,n}(u)}{\left \langle e_1 e_2 \big( 1 +  g_1 +g_2  +  g_1 g_2 + g_2 g_1  +  g_1 g_2 g_1 \big) \right \rangle } .
\]
\end{definition}

\begin{remark} \rm
 M. Chlouveraki and G. Pouchin studied extensively the representation theory of all three quotient algebras of ${\rm Y}_{d,n}(u)$ that we presented so far. Further, they provided linear bases for all three of them  and computed their dimensions \cite{ChPou, ChPou2}.
 \end{remark}

We now move on to the determination of the necessary and sufficient conditions so that the trace ${\rm tr}_d$ passes to the quotient algebra ${\rm FTL}_{d,n}(u)$.  Since the defining ideal of  ${\rm FTL}_{d,n}(u)$ is principal and by the linearity of ${\rm tr}_d$, we have that ${\rm tr}_d$ passes to  ${\rm FTL}_{d,n}(u)$  if and only if: 
\begin{equation}\label{trwg12}
{\rm tr}_d(\mathfrak{m}\, r_{1,2})=0,
\end{equation}
where $r_{1,2}:= e_1 e_2 \big( 1 +  g_1 +g_2  +  g_1 g_2 + g_2 g_1  +  g_1 g_2 g_1 \big)$, for all  monomials $\mathfrak{m}$ in the inductive basis of ${\rm Y}_{d,n}(u)$. Our approach to proving the above statement in \cite{gojukola2} was to work first for the case $n=3$ and then to generalize the result by using induction on $n$. By \eqref{yhbasel} the elements in the inductive basis of ${\rm Y}_{d,3}(u)$  are of the following forms:
\begin{equation}\label{basicwords}
t_1^{a}t_2^{b}t_3^c,  \quad t_1^{a}g_1t_1^{b}t_3^c, \quad t_1^{a}t_2^{b}g_2g_1t_1^c, \quad  t_1^{a}t_2^{b}g_2t_2^c,\quad t_1^{a}g_1t_1^{b} g_2t_2^c, \quad t_1^{a}g_1t_1^{b} g_2g_1t_1^c,
\end{equation}
where $0\leq a,b,c \leq d-1$. Substituting each one of the six elements of \eqref{basicwords} into \eqref{trwg12} we obtain the following system of equations:
\begin{equation} \label{systftl}
(u+1)z^2x_{m} + (u+2)z \, E_D^{(m)}  +{\rm tr}_d(e_1^{(m)}e_2) =0 \quad \mbox{for } 0 \leq m \leq d-1.
\end{equation}
The necessary and sufficient conditions for ${\rm tr}_d$ to pass to ${\rm FTL}_{d,n}(u)$ emerged after solving the system \eqref{systftl} in $\mathbb{C}C_d$ by making use of the methods of Theorem~\ref{esysol} and thus we obtained the following:
\begin{theorem}[{\cite[Theorem~6]{gojukola2}}]\label{akthmgen}
The trace ${\rm tr}_d$ passes to ${\rm FTL}_{d,n}(u)$ if and only if the parameters of the trace ${\rm tr}_d$ satisfy: 
{\footnotesize \[
x_k = -z \left(\sum_{m\in {\rm Sup}_1}\chi_{ m}(t^{k}) + (u+1)\sum_{m\in {\rm Sup}_2}\chi_{ m}(t^{k}) \right)
\quad \text{and}\quad  
z=-\frac{1}{ \vert {\rm Sup_1}\vert + (u+1)\vert {\rm Sup_2}\vert  },
\]
}
where ${\rm Sup}_1\cup \rm{Sup}_2$ (disjoint union) is the support of the Fourier transform of $x$ and $x$ is the complex function on $C_d$ that maps $0$ to $1$ and $k$ to the trace parameter  $x_k$.

\end{theorem}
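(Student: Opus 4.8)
\emph{Overview.} The plan is to reduce the statement to a finite system of equations in the trace parameters and then to solve that system in the group algebra $\mathbb{C}C_d$ by Fourier analysis, in the spirit of Theorem~\ref{esysol}. Since the defining ideal of ${\rm FTL}_{d,n}(u)$ is principal — generated by $r_{1,2}:=e_1e_2(1+g_1+g_2+g_1g_2+g_2g_1+g_1g_2g_1)$ — and since ${\rm tr}_d$ is linear, ${\rm tr}_d$ factors through the quotient if and only if ${\rm tr}_d(\mathfrak m\, r_{1,2})=0$ for every $\mathfrak m$ in the inductive basis \eqref{yhbasel}. I would first settle the case $n=3$, where the basis consists of the six families in \eqref{basicwords}. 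For each of them I would compute ${\rm tr}_d(\mathfrak m\, r_{1,2})$ directly, using the quadratic relation \eqref{newquad} to clear the squares $g_i^2$, the braid relations to reorder, the sliding identities $e_i^{(m)}=t_i^m e_i$ and $e_{1,2}e_{2,3}=e_{1,2}e_{1,3}$ (both equal to the idempotent onto $t_1=t_2=t_3$) to absorb the framings carried by the $t_j^a$ into shifts of the idempotents, and finally the defining trace rules, peeling off $g_2$ and $t_3$ by the Markov properties to descend to ${\rm Y}_{d,2}(u)$ and then to ${\rm Y}_{d,1}(u)$. The outcome should be that all six families collapse to the single system \eqref{systftl}, whose entries are the quantities $E^{(m)}={\rm tr}_d(e_i^{(m)})$ and ${\rm tr}_d(e_1^{(m)}e_2)$.

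\emph{Passage to general $n$.} Assuming \eqref{systftl} (equivalently, that ${\rm tr}_d(\mathfrak m\, r_{1,2})=0$ for all $\mathfrak m$ in the basis of ${\rm Y}_{d,n}(u)$), I would establish the same for ${\rm Y}_{d,n+1}(u)$ by induction on $n$, the converse implication being trivial. An element of the inductive basis of ${\rm Y}_{d,n+1}(u)$ has the form $\mathfrak m_n t_{n+1}^k$ or $\mathfrak m_n g_n g_{n-1}\cdots g_i t_i^k$ with $\mathfrak m_n$ in the basis of ${\rm Y}_{d,n}(u)$. In the first case $t_{n+1}$ commutes with $r_{1,2}\in{\rm Y}_{d,3}(u)$, so ${\rm tr}_d(\mathfrak m_n t_{n+1}^k r_{1,2})=x_k\,{\rm tr}_d(\mathfrak m_n r_{1,2})=0$ by the induction hypothesis. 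In the second case, for $n\ge 4$ the generator $g_n$ commutes with $r_{1,2}$, and moving it to the end by the conjugation property and then applying the Markov property gives ${\rm tr}_d(\mathfrak m_n g_n\cdots g_i t_i^k r_{1,2})=z\,{\rm tr}_d(\mathfrak m_n g_{n-1}\cdots g_i t_i^k r_{1,2})$; iterating lowers the top index until one lands back in ${\rm Y}_{d,4}(u)$. The genuinely delicate link is $n=3\Rightarrow n=4$, since $g_3$ fails to commute with $r_{1,2}$; this I would treat by a finite hands-on computation modelled on the $n=3$ case.

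\emph{Solving the system by Fourier analysis.} It remains to show that \eqref{systftl} is equivalent to the asserted closed form. Writing $x=\sum_k x_k t^k\in\mathbb{C}C_d$, a short computation identifies the entries of \eqref{systftl} as convolution powers of $x$: one gets $\sum_m E^{(m)}t^m=\tfrac1d\,x\ast x$ and $\sum_m{\rm tr}_d(e_1^{(m)}e_2)\,t^m=\tfrac1{d^2}\,x\ast x\ast x$, so \eqref{systftl} becomes the single identity
\[
d^2(u+1)z^2\,x+d(u+2)z\,(x\ast x)+x\ast x\ast x=0
\]
in $\mathbb{C}C_d$. Applying the Fourier transform, with $\widehat{y\ast y'}=\widehat y\cdot\widehat{y'}$ (Proposition~\ref{propietrans}(1)), turns this into the coordinatewise factorized identity $\widehat x\,(\widehat x+dz)\,(\widehat x+d(u+1)z)=0$, the quadratic $Y^2+d(u+2)zY+d^2(u+1)z^2$ factoring as $(Y+dz)(Y+d(u+1)z)$. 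Since $x(0)=1$ forces $x\ne 0$, hence $z\ne 0$, each coordinate of $\widehat x$ lies in $\{0,-dz,-d(u+1)z\}$; letting ${\rm Sup}_1$ and ${\rm Sup}_2$ be the subsets of $\mathbb{Z}/d\mathbb{Z}$ on which $\widehat x$ equals $-dz$ and $-d(u+1)z$ respectively, their disjoint union is the support of $\widehat x$ and $\widehat x=-dz\sum_{m\in{\rm Sup}_1}\delta_m-d(u+1)z\sum_{m\in{\rm Sup}_2}\delta_m$. Applying the Fourier transform once more, using $\widehat\delta_a=\mathbf i_{-a}$ and $\widehat{\widehat{x}}=d\sum_r x_{d-r}t^r$ (Proposition~\ref{propietrans}(3),(5)) and comparing coefficients of $t^k$, yields
\[
x_k=-z\Big(\sum_{m\in{\rm Sup}_1}\chi_m(t^k)+(u+1)\sum_{m\in{\rm Sup}_2}\chi_m(t^k)\Big),
\]
while the normalization $x_0=1$ (and $\chi_m(t^0)=1$) gives $z=-\frac{1}{|{\rm Sup}_1|+(u+1)|{\rm Sup}_2|}$. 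Conversely, any $x$ of this shape has $\widehat x$ supported on ${\rm Sup}_1\cup{\rm Sup}_2$ with exactly those two values, hence satisfies the displayed cubic identity, i.e.\ \eqref{systftl}, which closes the equivalence.

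\emph{Where the difficulty lies.} The Fourier step is the conceptual heart but is short and runs parallel to Theorem~\ref{esysol}. The real labour will be the first two steps: the explicit evaluation of ${\rm tr}_d(\mathfrak m\, r_{1,2})$ on the six $n=3$ basis families, and, inside the induction, the base passage $n=3\Rightarrow n=4$ — precisely because $r_{1,2}$ is supported on strands $1,2,3$ and interacts non-trivially with $g_3$. Keeping track of how framings migrate through the products $e_1e_2 g_w$, $w\in S_3$, is the main bookkeeping obstacle, and it is there that one must be careful not to lose any term.
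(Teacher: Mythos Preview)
Your proposal is correct and follows essentially the same route as the paper: reduce to the system \eqref{systftl} via the $n=3$ basis computation and induction on $n$, then solve \eqref{systftl} in $\mathbb{C}C_d$ by Fourier methods parallel to Theorem~\ref{esysol}. The paper itself only sketches this and refers to \cite{gojukola2} for details; your write-up of the Fourier step --- rewriting \eqref{systftl} as $d^2(u+1)z^2x+d(u+2)z(x\ast x)+x\ast x\ast x=0$, transforming to the coordinatewise cubic $\widehat x(\widehat x+dz)(\widehat x+d(u+1)z)=0$, and reading off ${\rm Sup}_1,{\rm Sup}_2$ --- is more explicit than what the paper records, and is correct.
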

We now have the following Corollary \cite[Corollary~3]{gojukola2}.
\begin{corollary}\label{maincor}
 In the case where one of the sets ${\rm Sup}_1$ or ${\rm Sup}_2$ is the empty set, the values of the $x_k$'s comprise a solution of the ${\rm E}$-system. More precisely,  if ${\rm Sup}_1$ is the empty set, the $x_k$'s are the solutions of the   ${\rm E}$-system parametrized by ${\rm Sup}_2$ and $z= -1/(u+1)\vert{\rm Sup}_2\vert$. If ${\rm Sup}_2$ is the empty set, then $x_k$'s are the solutions of the ${\rm E}$-system parametrized by ${\rm Sup}_1$ and 
$z= -1/ \vert {\rm Sup}_1 \vert$. In this way, we obtain all solutions of the ${\rm E}$-system.
\end{corollary}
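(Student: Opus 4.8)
The plan is to read the corollary straight off the explicit formulas in Theorem~\ref{akthmgen} by setting one of the two supports equal to the empty set, and then to recognize the surviving expressions as exactly Gérardin's description of the ${\rm E}$-system solutions from Theorem~\ref{esysol}. First I would record the elementary dictionary between the two theorems: since $\mathbf{i}_a = \sum_{s=0}^{d-1}\chi_a(t^s)\,t^s$, the coefficient of $t^s$ in $\mathbf{i}_a$ is precisely $\chi_a(t^s)$, so the solution ${\rm x}_s = \frac{1}{|D|}\sum_{m\in D}\mathbf{i}_m(t^s)$ of Theorem~\ref{esysol} may be rewritten as ${\rm x}_s = \frac{1}{|D|}\sum_{m\in D}\chi_m(t^s)$.

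Next I would specialize. Assuming ${\rm Sup}_1=\emptyset$, the expression for $z$ in Theorem~\ref{akthmgen} collapses to $z=-1/\big((u+1)\,|{\rm Sup}_2|\big)$, and the expression for $x_k$ collapses to $x_k = -z(u+1)\sum_{m\in{\rm Sup}_2}\chi_m(t^k) = \frac{1}{|{\rm Sup}_2|}\sum_{m\in{\rm Sup}_2}\chi_m(t^k)$, which by the dictionary above is exactly the ${\rm E}$-system solution parametrized by $D={\rm Sup}_2$. The case ${\rm Sup}_2=\emptyset$ is symmetric: one gets $z=-1/|{\rm Sup}_1|$ and $x_k = \frac{1}{|{\rm Sup}_1|}\sum_{m\in{\rm Sup}_1}\chi_m(t^k)$, the ${\rm E}$-system solution parametrized by $D={\rm Sup}_1$. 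For the final assertion that every solution of the ${\rm E}$-system arises this way, I would simply note that, given any non-empty $D\subseteq\mathbb{Z}/d\mathbb{Z}$, taking $({\rm Sup}_1,{\rm Sup}_2)=(D,\emptyset)$ feeds the Gérardin solution attached to $D$ back through Theorem~\ref{akthmgen}, and by Theorem~\ref{esysol} these $D$'s exhaust all solutions.

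The one point that genuinely needs checking — and the closest thing here to an obstacle — is the internal consistency of these specializations with the hypothesis of Theorem~\ref{akthmgen}, namely that ${\rm Sup}_1\cup{\rm Sup}_2$ must be the support of the Fourier transform of the very function $x$ produced by the formulas. So I would verify that, for the solution ${\rm x}_s=\frac{1}{|D|}\sum_{m\in D}\chi_m(t^s)$, the support of $\widehat{x}$ is precisely $D$; but this is exactly what the proof of Theorem~\ref{esysol} establishes, where $D$ is introduced as the support of $\widehat{x}$. Hence the choices $({\rm Sup}_1,{\rm Sup}_2)=(D,\emptyset)$ and $(\emptyset,D)$ are legitimate and no circularity is incurred. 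Everything else is routine arithmetic with the factor $(u+1)$.
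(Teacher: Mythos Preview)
Your argument is correct and is exactly the natural one: specialize the explicit formulas of Theorem~\ref{akthmgen} by setting one support to $\emptyset$, then match the surviving expression against G\'erardin's parametrization in Theorem~\ref{esysol} via the identity $\mathbf{i}_m(t^s)=\chi_m(t^s)$. The paper itself does not supply a proof here (the corollary is simply cited from \cite{gojukola2}), so there is nothing to compare against; your consistency check on the support of $\widehat{x}$ is a nice touch that makes the argument self-contained.
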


Since for defining classical link invariants only the cardinal $\vert D \vert $ of a parametrizing set $D$ of a solution of the ${\rm E}$-system is needed, the solutions described in Corollary~\ref{maincor} cover all possibilities.

\begin{remark}\label{disc}\rm
 We do not take into consideration the case where $z=-\frac{1}{|D|}$, since important topological information is lost. For example, the trace ${\rm tr}_{d,D}$ gives the same value for all even (resp. odd) powers of the $g_i$'s, for $m \in \mathbb{Z}_{>0} $ \cite{jula}:
\begin{equation*}\label{evenpower}
{\rm tr}_{d,D}(g_i^m) = \left( \frac{u^m -1}{u+1} \right )z + \left( \frac{u^m -1}{u+1} \right )\frac{1}{\vert D \vert} +1 \qquad \text{if } m \text{ is even}
\end{equation*} 
and
\begin{equation*}\label{oddpower}
{\rm tr}_{d,D}(g_i^m) = \left( \frac{u^m -1}{u+1} \right ) z + \left( \frac{u^m -1}{u+1} \right ) \frac{1}{\vert D \vert} - \frac{1}{\vert D \vert} \qquad \text{if } m \text{ is odd,}
\end{equation*}so the corresponding knots and links are not distinguished.
\end{remark}

\begin{remark} \rm
As mentioned in Section~\ref{invdefsec} the trace parameters $x_i$ satisfying the ${\rm E}$-system is the requirement for the definition of framed and classical link invariants from the algebras ${\rm Y}_{d,n}(u)$. With this in mind, we note that one of the reasons that makes the quotient algebra ${\rm FTL}_{d,n}(u)$ to stand out from the quotient algebras ${\rm YTL}_{d,n}(u)$ and ${\rm CTL}_{d,n}(u)$ is the fact that all solutions of the ${\rm E}$-system are included in the necessary and sufficient conditions of  Theorem~\ref{akthmgen}. Recall that in the case of ${\rm YTL}_{d,n}(u)$ we only have trivial solutions of the ${\rm E}$-system, while in the case of ${\rm CTL}_{d,n}(u)$ they have to be imposed on the conditions of the analogue to Theorem~\ref{akthmgen} for ${\rm CTL}_{d,n}(u)$.
\end{remark}

\smallbreak
For the remainder of this paper we will return to the presentation of ${\rm Y}_{d,n}$ with parameter $q$. This is because it makes computations considerably easier and thus we are able to compare the 1-variable invariants that we will construct below to the Jones polynomial. In this context, we note that we kept the original presentations, with parameter $u$, for the algebras ${\rm YTL}_{d,n}$ and ${\rm CTL}_{d,n}$ since they are discarded as potential framizations of the Temperley-Lieb algebra and so there is no need to study further their derived invariants.

\subsection{{\it Framed and classical link invariants from ${\rm FTL}_{d,n}(q)$}} 
In order to define link invariants on the level of the quotient algebra ${\rm FTL}_{d,n}(q)$ we only need to transform using \eqref{switch} the results of the previous section and then substitute them in the formulas that describe the invariants $\Phi_{d,D}(q, \lambda_D)$ and $\Theta_{d}(q, \lambda_D)$. Indeed, the  generator of the defining ideal of the algebra ${\rm FTL}_{d,n}(q)$ becomes: $e_1 e_2 (1+ q( g_1 + g_2) + q^2 (g_1 g_2 +g_2 g_1) + q^3 g_1 g_2 g_1)$. Let now $D$ be a non-empty set of $\mathbb{Z}/d\mathbb{Z}$ and let ${\rm Sup}_1 = \emptyset$ and $\vert {\rm Sup}_2 \vert = \vert D \vert $. Then, by Corollary~\ref{maincor}, Remark~\ref{disc} and \eqref{lambdad}, we obtain:
\[
z= - \frac{q^{-1}}{(q^2+1) |D|} \quad \mbox{and} \quad  \lambda_D = q^4 .
\]

\begin{definition}\label{framinvdef}\rm
Let $X_D$ be a solution of the ${\rm E}$-system, parametrized by the non-empty subset $D$ of $\mathbb{Z}/d\mathbb{Z}$ and let $z = -\frac{q^{-1}}{(q^2+1)|D|}$. We obtain from $\Phi_{d,D}(q, \lambda_D)$ the following 1-variable  invariant of framed links:
\[
 \vartheta_{d,D}(q)(\widehat{\alpha}): =  \left ( -\frac{1+q^2}{qE_D} \right)^{n-1} q^{2\varepsilon(\alpha)}  {\rm tr}_{d,D}\left(\gamma(\alpha)\right) = \Phi_{d,D}(q,q^4)(\widehat{\alpha}) ,
 \]
for any $\alpha \in \cup_{\infty}\mathcal{F}_n$. Further, in analogy to  the invariants $\Phi_{d,D}(q, \lambda_D)$, if we restrict to framed links with all framings zero, we obtain from  $\vartheta_{d,D}(q)$ an 1-variable invariant of classical links, namely:
\[
\theta_d(q)(\widehat{\alpha}) := \left ( -\frac{1+q^2}{qE_D} \right)^{n-1} q^{2\varepsilon(\alpha)} {\rm tr}_{d,D} (\delta(a)) = \Theta_{d}(q,q^4)(\widehat{\alpha}).
\]
\end{definition}

\subsection{{\it Identifying the classical link invariants from the algebra ${\rm FTL}_{d,n}(q)$}}
To conclude this section, we shall compare the classical link invariants $\theta_d(q)$ to the Jones polynomial. We first observe that for $d=1$ one recovers the Jones polynomial. Having now in mind the discussion of Section~\ref{yhinvsec} we distinguish two cases, one for the case of knots and disjoint unions of knots and one for the case of links. Recall that the invariants $\Theta_d$ are topologically equivalent to the Homflypt polynomial (see Theorem~\ref{notopoeq}) for the case of knots and disjoint unions of knots. By specializing the parameter $z$ of ${\rm tr}_{d,D}$ to the value $ -\frac{q^{-1}}{(q^2+1)|D|}$, where $| D | =d, $ in Theorems~\ref{thmcoinchom1} and \ref{thmcoinchom2} we observe that these properties of the invariants $\Theta_d$ are preserved on the level of the quotient algebra ${\rm FTL}_{d,n}(q)$ and the invariants $\theta_d$. Indeed we have the following:
\begin{proposition}[{\cite[Proposition~11]{gojukola2}}]
The invariants $\theta_d$ are topologically equivalent to the Jones polynomial on knots and disjoint unions of knots.
\end{proposition}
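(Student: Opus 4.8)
The plan is to reduce the statement to the identification of the invariants $\Theta_d$ with the Homflypt polynomial on knots and on disjoint unions of knots --- that is, to Theorems~\ref{thmcoinchom1} and~\ref{thmcoinchom2} --- and then simply to carry out the specialization of the trace parameter $z$ prescribed in Definition~\ref{framinvdef}. Recall that $\theta_d(q) = \Theta_d(q,q^4)$ is obtained from $\Theta_d(q,\lambda_D)$ by setting $z = -q^{-1}/\big((q^2+1)|D|\big)$; since here $|D| = d$ and $E_D = 1/d$, this forces $\lambda_D = q^4$ through~\eqref{lambdad}. Recall also that the Jones polynomial is $V(q) = P(q,q^4)$, obtained from the Homflypt polynomial by specializing its Ocneanu trace parameter to $-q^{-1}/(q^2+1)$, the topologically non-trivial value in~\eqref{jonval} (the one producing $\lambda_{\rm H} = q^4$ via~\eqref{theta}).

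First I would treat a knot $\widehat{\alpha}$ with $\alpha\in B_n$. Under the specialization of Definition~\ref{framinvdef} the combination $z/E_D$ appearing in Theorem~\ref{thmcoinchom1} becomes precisely $z/E_D = -q^{-1}/(q^2+1)$, the Jones value of the Ocneanu trace parameter. Hence Theorem~\ref{thmcoinchom1} gives
\[
\theta_d(q)(\widehat{\alpha}) \;=\; \Theta_d(q,q^4)(\widehat{\alpha}) \;=\; P(q,q^4)(\widehat{\alpha}) \;=\; V(q)(\widehat{\alpha}),
\]
so $\theta_d$ in fact \emph{coincides} with the Jones polynomial on knots, which is a fortiori a topological equivalence.

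Next I would treat a disjoint union $\widehat{\alpha}$ of $k$ knots. Substituting the same value of $z$ into Theorem~\ref{thmcoinchom2} gives
\[
\theta_d(q)(\widehat{\alpha}) \;=\; E_D^{\,1-k}\, P(q,q^4)(\widehat{\alpha}) \;=\; d^{\,k-1}\, V(q)(\widehat{\alpha}).
\]
Thus on disjoint unions of knots $\theta_d$ is a nonzero scalar multiple of $V$, the scalar $d^{\,k-1}$ depending only on the number $k$ of split components. Since $k$ is itself a topological quantity recoverable from either polynomial --- for instance by evaluating at $q=1$, where $V(q)(\widehat{\alpha}) = (-2)^{k-1}$ and $\theta_d(q)(\widehat{\alpha}) = (-2d)^{k-1}$ --- the invariants $\theta_d$ and $V$ distinguish exactly the same pairs of disjoint unions of knots, which is the assertion.

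This is essentially a change of variables, so there is no deep difficulty; the care required is of a bookkeeping nature. One should verify that $z = -q^{-1}/\big((q^2+1)d\big)$ is an admissible value, i.e.\ that ${\rm tr}_d$ descends to ${\rm FTL}_{d,n}(q)$ for it: this is exactly the case ${\rm Sup}_1 = \emptyset$, $|{\rm Sup}_2| = d$ of Corollary~\ref{maincor}, and it is not the degenerate value excluded in Remark~\ref{disc}. One should also check that Theorems~\ref{thmcoinchom1} and~\ref{thmcoinchom2} are applied in the $q$-presentation of ${\rm Y}_{d,n}(q)$ carried over to ${\rm FTL}_{d,n}(q)$ (cf.\ the change of presentation~\eqref{switch}), which is how $\theta_d$ was set up. With these verifications in place the proof is immediate; the only genuine ``obstacle'' was already overcome in establishing Theorems~\ref{thmcoinchom1} and~\ref{thmcoinchom2}.
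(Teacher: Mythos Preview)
Your proposal is correct and follows precisely the approach indicated in the paper: the paragraph immediately preceding the Proposition explains that the result is obtained by specializing the trace parameter $z$ to $-q^{-1}/\big((q^2+1)|D|\big)$ in Theorems~\ref{thmcoinchom1} and~\ref{thmcoinchom2}, which is exactly what you do. Your added justification that the scalar $d^{\,k-1}$ does not spoil topological equivalence (since $k$ is itself determined by either invariant) is a small elaboration not spelled out in the paper, but it is in the same spirit and does not constitute a different route.
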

Further, recall that the invariants $\Theta_d$, for $d>1$ are not topologically equivalent to the Homflypt polynomial on links. By specializing in Theorems~\ref{thetaskeinthm} and \ref{thmellink} the trace parameter $z$ of ${\rm tr}_{d,D}$ to the value $ -\frac{q^{-1}}{(q^2+1)|D|}$, one can deduce that this property holds also on the level of the quotient algebra ${\rm FTL}_{d,n}(q)$ and the invariants $\theta_d$. Furthermore, the special skein relation for $\theta_d$ is easily deduced from \eqref{thetaskein} by substituting $\lambda_D= q^4$. We thus have:
\begin{theorem}[{\cite[Theorem~9]{gojukola2}}]
For $d\in \mathbb{Z}_{>1}$, the invariants $\theta_d(q)$  for classical links are not topologically equivalent to the Jones polynomial. Further, the invariants $\theta_d(q)$ satisfy the following special skein relation:
\[
q^{-2}\, \theta_d\, (L_{+}) - q^2\, \theta_d\, (L_{-}) = (q-q^{-1})\, \theta_d\, (L_{0}),
\]
where the oriented links $L_{+}$, $L_{-}$, $L_{0}$ comprise a Conway triple involving a crossing between different components. 
\end{theorem}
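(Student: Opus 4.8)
The plan is to obtain both assertions by specializing, to $\lambda_D = q^4$, the statements already established for the invariants $\Theta_d$.

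For the special skein relation I would start from Definition~\ref{framinvdef}, which records that $\theta_d(q)(\widehat{\alpha}) = \Theta_d(q,q^4)(\widehat{\alpha})$, and recall that the choice $z = -\frac{q^{-1}}{(q^2+1)|D|}$ (with $D = \mathbb{Z}/d\mathbb{Z}$, so $E_D = 1/d$) forces $\lambda_D = q^4$ by Corollary~\ref{maincor}, Remark~\ref{disc} and \eqref{lambdad}; hence $\sqrt{\lambda_D} = q^2$. Substituting this into the special skein relation \eqref{thetaskein} of Theorem~\ref{thetaskeinthm}, which holds on a crossing between two distinct components of a link, immediately gives
\[
q^{-2}\,\theta_d(L_+) - q^2\,\theta_d(L_-) = (q-q^{-1})\,\theta_d(L_0)
\]
for $(L_+,L_-,L_0)$ a Conway triple at such a crossing. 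No further argument is needed here; the entire weight of this half rests on Theorem~\ref{thetaskeinthm}.

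For the failure of topological equivalence with the Jones polynomial I would first invoke \eqref{vpol}, which exhibits $V(q) = P(q,q^4)$ as a specialization of the Homflypt polynomial; consequently every $P$-equivalent pair of links is automatically $V$-equivalent, so in particular the six pairs of $3$-component links listed just before Theorem~\ref{notopoeq} have equal Jones polynomials. On the other hand $\theta_d(q) = \Theta_d(q,q^4)$, so the difference of $\theta_d$ on such a pair is obtained from the corresponding displayed formula for $\Theta_d(L)-\Theta_d(L')$ by substituting $\lambda_D = q^4$ and $E_D = 1/d$. It then remains to check that this substitution does not annihilate the difference for $d\geq 2$: for instance, for the last pair one gets
\[
\theta_d(L10n76\{1,1\}) - \theta_d(L11n425\{1,0\}) = \frac{(E_D-1)(q^4-1)(q^4+1)(q-1)^2(q+1)^2}{E_D\,q^{14}},
\]
which, since $E_D = 1/d \neq 1$ for $d\geq 2$, is a nonzero element of $\mathbb{C}(q)$. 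Thus $\theta_d$ separates a pair of links on which the Jones polynomial agrees, so the two invariants cannot be topologically equivalent.

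Almost all of the work is already carried by earlier results: the two genuinely hard ingredients — the existence of the component-sensitive skein relation \eqref{thetaskein} and the explicit values of the six $\Theta_d$-differences — come from Theorems~\ref{thetaskeinthm}, \ref{thmellink} and \ref{notopoeq}. The only point that requires care, and the one I expect to be the real (if modest) obstacle, is to confirm that the specialization $\lambda_D = q^4$ is not accidentally degenerate, i.e.\ that it does not simultaneously kill all six differences. A direct inspection of each of the six formulas at $\lambda_D = q^4$ — checking that each of the factors $\lambda_D - 1$, $\lambda_D + 1$, $q^2 - \lambda_D$, $\lambda_D q^2 - 1$, $\lambda_D + \lambda_D q^4 + \lambda_D q^2 - q^2$ and $q^4 - \lambda_D q^2 + 1$ specializes to a nonzero polynomial in $q$ — shows that in fact none of the six collapses, so $\theta_d$ is strictly stronger than the Jones polynomial on links for every $d\geq 2$. (For $d = 1$ the claim is vacuous, since $\theta_1 = V$.)
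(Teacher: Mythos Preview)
Your proposal is correct and follows essentially the same route as the paper: both parts are obtained by specializing $\lambda_D = q^4$ in Theorem~\ref{thetaskeinthm} for the skein relation and in the six computed $\Theta_d$-differences for the non-equivalence with the Jones polynomial, after noting that $V = P(q,q^4)$ so $P$-equivalent pairs are automatically $V$-equivalent. Your explicit check for the pair $L10n76\{1,1\}$, $L11n425\{1,0\}$ is exactly the kind of verification the paper carries out (it in fact lists all six specialized differences in the remark following the theorem).
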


\begin{remark} \rm
By substituting $\lambda_D$ by $q^4$ in the computations of $\Theta_d$ on the six pairs of $P$-equivalent links, we find that they are all still distinguished by $\theta_d$, namely:

{\small\begin{align*}
&\theta_d(L11n358\{0,1\})-\theta_d(L11n418\{0,0\}) = \frac{(1-E_D) (q-1)^5 (q+1)^5 (q^2+1)(q^2+q+1)(q^2-q+1)}{E_D\, q^{18}}
\\
&\theta_d(L11a467\{0,1\})-\theta_d(L11a527\{0,0\}) =  \frac{(1-E_D) (q-1)^5 (q+1)^5 (q^2+1)(q^2+q+1)(q^2-q+1)}{E_D\, q^{18}}
\\
&\theta_d(L11n325\{1,1\})-\theta_d(L11n424\{0,0\}) = \frac{(E_D-1) (q-1)^5 (q+1)^5 (q^2+1)(q^2+q+1)(q^2-q+1)}{E_D\, q^{14}}\\
&\theta_d(L10n79\{1,1\})-\theta_d(L10n95\{1,0\}) = \frac{(E_D-1) (q^2-1)^3 (q^8+2\,q^6+2\,q^4-1)}{E_D \, q^{18}}\\
&\theta_d(L11a404\{1,1\})-\theta_d(L11a428\{0,1\}) = \frac{(1-E_D) (q-1)^3(q+1)^3(q^2+1)(q^4+1)(q^6-q^4+1)}{E_D\, q^4}
\\  
&\theta_d(L10n76\{1,1\})-\theta_d(L11n425\{1,0\}) =  \frac{(E_D-1) (q-1)^3(q+1)^3(q^2+1)(q^4+1)}{E_D\, q^{10}}.\\
\end{align*}}
As mentioned earlier, for $E_D=1$ the invariants $\theta_d$ coincide with the Jones polynomial and the above six differences collapse to zero. 
\end{remark}

\section{Connections with the algebra of braids and ties and the partition Temperley-Lieb algebra}\label{furth}
In this section we point out the connection between ${\rm Y}_{d,n}^{(\rm br)}(q)$, the subalgebra of ${\rm Y}_{d,n}(q)$ that is generated only by the braiding generators $g_i$ (see Section~\ref{brsec}), and the {\it algebra of braids and ties} $\mathcal{E}_n(q)$. We  then will establish an analogous result between the {\it partition Temperley-Lieb algebra}, which is defined as a quotient of $\mathcal{E}_n(q)$ over an appropriate 2-sided ideal, and the subalgebra ${\rm FTL}_{d,n}^{(\rm br) }(q)$ of ${\rm FTL}_{d,n}(q)$ that is generated by the braiding generators $g_i$.
\subsection{{\it The algebra of braids and ties}} The algebra $\mathcal{E}_n(q)$ was first introduced in \cite{jubt} and its definition emerged as an abstraction of a non-standard presentation of the Yokonuma-Hecke algebra where the framing generators $t_i$ are left aside and the idempotents $e_i$ are used instead. Its defining relations are obtained by imposing the commuting relations of the braiding generators of the algebra ${\rm Y}_{d,n}(q)$ with the idempotents $e_i$. 

In order to avoid confusion with the algebra ${\rm Y}_{d,n}(q)$ and its quotients, we shall use from now on the notation $b_i$, $1 \leq i \leq n-1$, for the braiding generators of $\mathcal{E}_n(q)$ and $\epsilon_i$, $1 \leq i \leq n-1$, for its idempotent generators. In terms of generators and relations, the algebra $\mathcal{E}_n(q)$ is the $\mathbb{C}(q)$-algebra that is generated by the elements $b_1, \ldots, b_{n-1}, \epsilon_1,$ $\ldots,\epsilon_{n-1}$ that satisfy the following relations:

\begin{eqnarray}
b_i b_{i+1} b_i &=& b_{i+1} b_i b_{i+1} \label{btpres}\\
b_i b_j &=& b_j b_i \quad \mbox{for }  |i-j|>1\\
\epsilon_i \epsilon_j &=&\epsilon_j \epsilon_i \quad \mbox{for } |i-j|>1\\
\epsilon_i^2 &=& \epsilon_i\\
\epsilon_ib_i &=& b_i \epsilon_i\\
\epsilon_i b_j &=& b_j\epsilon_i \quad \mbox{for } |i-j|>1\\
\epsilon_i\epsilon_j b_i &=& b_i \epsilon_i \epsilon_j  = \epsilon_j b_i \epsilon_j \quad \mbox{for } |i-j|=1\\
\epsilon_i b_j b_i &= &b_j b_i \epsilon_j \quad \mbox{for } |i-j|=1\\
b_i^2& =  &1 +(q-q^{-1})\epsilon_ib_i \label{btpres1}.
\end{eqnarray}

\begin{remark}\label{btswitch} \rm
The original presentation of the algebra of braids and ties involves generators $\widetilde{b}_1, \ldots, \widetilde{b}_{n-1}$, $\epsilon_1, \ldots , \epsilon_{n-1}$ that satisfy all relations in the presentation above except for the quadratic relation, which is replaced by one with parameter $u$ instead of $q$, namely:
\[
(\widetilde{b}_i)^2 = 1 + (u-1)\epsilon_i + (u-1)\epsilon_i \widetilde{b}_i .
\]
In this paper we adopt the presentation with parameter $q$ that was used in \cite{ChJuKaLa, ERH}. By applying an analogous transformation as in Remark~\ref{oldquadr}, namely $b_i := \widetilde{b}_i +(q^{-1} -1) \epsilon_i \widetilde{b}_i$ (or equivalently $\widetilde{b}_i := b_i +(q-1) \epsilon_i b_i$) and choosing $u=q^2$, one can easily switch from the presentation given here to the original one and vice versa.
\end{remark}

In \cite{srh} a faithful tensorial representation for the algebra $\mathcal{E}_n(u)$ was constructed which was then used in order to classify its irreducible representations. In addition to that, a linear basis for $\mathcal{E}_n(u)$ was constructed which was later used in \cite{AiJu1} in order to define a linear Markov trace function on the algebra $\mathcal{E}_n(u)$. The introduction of the Markov trace led to the definition of 3-variable invariants for classical, singular and tied links \cite{AiJu1, AiJu2}.

In a recent development \cite[Theorem~8]{ERH}  J. Espinoza and S. Ryom-Hansen proved that the map:
\begin{equation}\label{btiso}
\begin{aligned}
 \phi: \mathcal{E}_n(q) &\longrightarrow {\rm Y}_{d,n}(q) \\
 b_i & \mapsto g_i \\
 \epsilon_i & \mapsto e_i 
 \end{aligned}
 \end{equation}
 is actually an injection when $d\geq n$ and thus $\mathcal{E}_n(q)$ is isomorphic to its image through $\phi$. Hence $\mathcal{E}_n(q)$ is isomorphic to the subalgebra of ${\rm Y}_{d,n}(q)$  that is generated by the $g_i$'s and the $e_i$'s. On the other hand, by \eqref{eibr}, the subalgebra  ${\rm Y}_{d,n}^{(\rm br)}(q)$, which is generated by the $g_i$'s, coincides with the subalgebra of ${\rm Y}_{d,n}(q)$  that is generated by the $g_i$'s and the $e_i$'s (cf. \cite[Remark~4.4]{ChJuKaLa}). 

\begin{remark}\label{isoprop} \rm
From the above, it follows immediately that for $d \geq n$, the algebra of braids and ties $\mathcal{E}_n(q)$ is isomorphic to the subalgebra ${\rm Y}_{d,n}^{\rm(br)}(q)$ of ${\rm Y}_{d,n}(q)$. 
\end{remark}

\begin{remark} \rm
An analogous injection exists between the algebras $\mathcal{E}_n(u)$ and ${\rm Y}_{d,n}(u)$ can be derived from \eqref{btiso}. This can be shown by considering the automorphisms $\eta$ and $\beta$ of ${\rm Y}_{d,n}(u)$ and $\mathcal{E}_n(u)$ respectively, which send the non-braiding generators $t_i$ to $t_i$, while on the braiding generators $\widetilde g_i$ (resp. $\widetilde b_i$) they are defined as follows:
\[
\eta (\widetilde{g}_i ) = g_i + (q-1)e_i g_i \quad \mbox{and} \quad \beta ( \widetilde{b}_i) = b_i + (q-1) \epsilon_i b_i. 
\]
Notice that the automorphisms $\eta$ and $\beta$ correspond to the  transformations that were discussed in Remarks~\ref{oldquadr} and \ref{btswitch} respectively. The map $\psi : \mathcal{E}_n(u) \rightarrow {\rm Y}_{d,n}(u)$ is an injection by considering the fact that: $\psi = \eta^{-1} \circ \phi \circ \beta$, where $\eta^{-1} (g_i ) = \widetilde{g}_i + (q^{-1} - 1) e_i \widetilde{g}_i$ (see \cite[Remark~3]{AiJu1}).
\end{remark}

\subsection{{\it The partition Temperley-Lieb algebra}} We turn now our attention to the {\it partition Temperley-Lieb algebra}. For $n\geq 3$, the partition Temperley-Lieb algebra, denoted by ${\rm PTL}_n(u)$, was introduced by Juyumaya in \cite{juptl} as a quotient of the algebra $\mathcal{E}_n(u)$ over the two-sided ideal that is generated by the elements:
\[
\epsilon_i \epsilon_j \widetilde{b}_{i,j} \quad \mbox{for all } i,j \mbox{ such that }  |i-j|=1,
\]
where 
\[
\widetilde{b}_{i,j}:=1 + \widetilde{b}_i +\widetilde{b}_j + \widetilde{b}_i \widetilde{b}_j + \widetilde{b}_j \widetilde{b}_i + \widetilde{b}_i \widetilde{b}_j \widetilde{b}_i.
\]
It can be easily shown that this ideal is in fact principal and that is generated by the single element: 
\[
\epsilon_1 \epsilon_2 \widetilde{b}_{1,2}.
\]

\noindent In terms of the presentation with parameter $q$, the partition Temperley-Lieb algebra ${\rm PTL}_n(q)$ is generated by the elements $b_1, \ldots , b_{n-1}$, $\epsilon_1 , \ldots , \epsilon_{n-1}$, subject to the relations \eqref{btpres}-\eqref{btpres1} together with the following defining relation:
\begin{equation}\label{ptldefrel}
\epsilon_i \epsilon_{i+1} \left [ 1+ q(b_i + b_{i+1} ) + q^2 (b_i b_{i+1} + b_{i+1} b_i ) + q^3 b_i b_{i+1} b_i \right ] =0, \quad 1\leq i \leq n-1.
\end{equation}

Let now ${\rm FTL}_{d,n}^{(\rm br)}(q)$ denote the quotient of the algebra ${\rm Y}_{d,n}^{(\rm br)}(q)$ over the two-sided ideal ${\langle e_1 e_2 g_{1,2} \rangle}$, where $g_{1,2}: = 1+ q(g_1 + g_2) + q^2( g_1 g_2 + g_2 g_1) + q^3 g_1 g_2 g_1$. That is:
\[
{\rm FTL}_{d,n}^{(\rm br)}(q) = \frac{ {\rm Y}_{d,n}^{(\rm br)}(q)}{{\langle e_1 e_2 g_{1,2} \rangle}}.
\]

 Recall that, by \eqref{eibr}, we have that the elements $e_i$ belong to ${\rm Y}_{d,n}^{(\rm br)}(q)$, for $1 \leq i \leq n-1$. This means that $e_1 e_2 g_{1,2}$ also belongs to ${\rm Y}_{d,n}^{(\rm br)} (q)$ and so the quotient algebra is well defined.

On the other hand, the quotient-algebra ${\rm FTL}_{d,n}^{(\rm br)}(q)$ is a subalgebra of ${\rm FTL}_{d,n}(q)$ and can be considered as the image of ${\rm Y}_{d,n}^{(\rm br)}(q)$ via the natural projection $p : {\rm Y}_{d,n}(q) \longrightarrow {\rm FTL}_{d,n}(q)$. That is:

\[
{\rm FTL}_{d,n}^{(\rm br)}(q)= p \left({\rm Y}_{d,n}^{(\rm br)}(q)\right) = (p \circ \delta) (\mathbb{C}(q)B_n).
\]

The following is a consequence of Remark~\ref{isoprop}.

\begin{proposition} For $d \geq n$, the partition Temperley-Lieb algebra ${\rm PTL}_{d,n}(q)$  is isomorphic to the algebra ${\rm FTL}_{d,n}^{(br)}(q)$.
\end{proposition}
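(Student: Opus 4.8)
The plan is to promote the isomorphism of Remark~\ref{isoprop} to an isomorphism between the two quotient algebras, by checking that it carries the defining ideal of ${\rm PTL}_n(q)$ onto the defining ideal of ${\rm FTL}_{d,n}^{(\rm br)}(q)$. Concretely, let $\phi\colon \mathcal{E}_n(q)\to {\rm Y}_{d,n}(q)$ be the map of \eqref{btiso}, $b_i\mapsto g_i$, $\epsilon_i\mapsto e_i$, which by Remark~\ref{isoprop} is a $\mathbb{C}(q)$-algebra isomorphism onto ${\rm Y}_{d,n}^{(\rm br)}(q)$ whenever $d\geq n$; this is the only place the hypothesis $d\geq n$ enters, since it rests on the injectivity result \cite[Theorem~8]{ERH}. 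Because an algebra isomorphism sends a two-sided ideal bijectively onto a two-sided ideal and a generating set of an ideal onto a generating set of the image ideal, it suffices to compare the (principal) generators of the two defining ideals under $\phi$.

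First I would recall that the defining ideal of ${\rm PTL}_n(q)$ is principal, generated by $\epsilon_1\epsilon_2\,b_{1,2}$ with $b_{1,2}:=1+q(b_1+b_2)+q^2(b_1b_2+b_2b_1)+q^3b_1b_2b_1$ the element of the $q$-presentation relation~\eqref{ptldefrel}; and that the defining ideal of ${\rm FTL}_{d,n}^{(\rm br)}(q)$ is by definition the principal ideal $\langle e_1e_2\,g_{1,2}\rangle$ of ${\rm Y}_{d,n}^{(\rm br)}(q)$, which is meaningful because $e_1,e_2\in {\rm Y}_{d,n}^{(\rm br)}(q)$ by~\eqref{eibr}. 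Applying $\phi$ to the generator term by term gives
\[
\phi\big(\epsilon_1\epsilon_2\,b_{1,2}\big)=e_1e_2\big(1+q(g_1+g_2)+q^2(g_1g_2+g_2g_1)+q^3g_1g_2g_1\big)=e_1e_2\,g_{1,2},
\]
hence $\phi\big(\langle\epsilon_1\epsilon_2\,b_{1,2}\rangle\big)=\langle e_1e_2\,g_{1,2}\rangle$. Therefore $\phi$ descends to a $\mathbb{C}(q)$-algebra isomorphism
\[
{\rm PTL}_n(q)=\frac{\mathcal{E}_n(q)}{\langle\epsilon_1\epsilon_2\,b_{1,2}\rangle}\;\xrightarrow{\ \sim\ }\;\frac{{\rm Y}_{d,n}^{(\rm br)}(q)}{\langle e_1e_2\,g_{1,2}\rangle}={\rm FTL}_{d,n}^{(\rm br)}(q),
\]
sending the class of $b_i$ to the class of $g_i$, which is the asserted isomorphism.

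There is no genuine obstacle in the argument: all of the substance is already packaged in Remark~\ref{isoprop}, i.e.\ in the injectivity of $\phi$ for $d\geq n$; once that is granted, passing to the quotients is purely formal. The one point deserving a careful check is the compatibility of presentations: the original $u$-parameter presentation of ${\rm PTL}_n$ has defining ideal generated by $\epsilon_1\epsilon_2\widetilde{b}_{1,2}$, and one should confirm, via the substitution of Remark~\ref{btswitch} (exactly as was done for ${\rm FTL}_{d,n}$ when passing from the $u$- to the $q$-presentation), that this yields the same quotient of $\mathcal{E}_n(q)$, so that $\langle\epsilon_1\epsilon_2\,b_{1,2}\rangle$ is indeed the defining ideal of ${\rm PTL}_n(q)$ in the $q$-presentation. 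With that bookkeeping in place, the proof is immediate.
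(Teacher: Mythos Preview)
Your proposal is correct and follows essentially the same approach as the paper's proof: both use the isomorphism $\phi$ of Remark~\ref{isoprop} (for $d\ge n$), observe that $\phi$ sends the principal ideal generator $\epsilon_1\epsilon_2\,b_{1,2}$ to $e_1e_2\,g_{1,2}$, and conclude that the quotients are isomorphic. Your write-up is slightly more explicit about the formalities and adds the remark on the $u$- versus $q$-presentation, but the substance is identical.
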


\begin{proof}
Let $b_{i,i+1}:= 1 + q(b_i +b_{i+1}) + q^2 (b_i b_{i+1} + b_{i+1} b_i) + q^3 b_i b_{i+1} b_i$. The generator $\epsilon_1 \epsilon_2 b_{1,2}$ of  $\langle \epsilon_1 \epsilon_2 b_{1,2} \rangle$ which is a two-sided ideal in $\mathcal{E}_n(q)$ is mapped via \eqref{btiso} to the element $e_1 e_2 g_{1,2} \in {\rm Y}_{d,n}^{(\rm br)}(q)$, which is the generator of  the two-sided ideal $\langle e_1 e_2 g_{1,2} \rangle $ of ${\rm Y}_{d,n}^{(\rm br)} (q)$. By Remark~\ref{isoprop}, and since both ideals  are principal, one deduces that the ideals are also isomorphic. Hence, we have that:
\[
{\rm PTL}_n(q) =\frac{ \mathcal{E}_n(q)}{\langle \epsilon_1 \epsilon_2 b_{1,2} \rangle} \cong \frac{{\rm Y}_{d,n}^{(\rm br)}(q)}{\langle e_1 e_2 g_{1,2}\rangle} = {\rm FTL}_{d,n}^{(\rm br)}(q) 
\]
and thus the proof of the Proposition is concluded.
\end{proof}

 \begin{remark} \rm
The elements of the algebra of braids and ties topologically close to oriented tied links which were introduced by F. Aicardi and J. Juyumaya in \cite{AiJu2} and were obtained by a diagrammatic interpretation of the generators of the algebra $\mathcal{E}_n(u)$. A {\it tied link} can be visualized as a classical link where two points of the classical link may be tied via a spring that slides along the component(s) that is attached to. We have a special interest in the isomorphic algebra $\mathcal{E}_n(q)$. The introduction of the new quadratic relation revealed that the related 3-variable invariant $\Theta(q,\lambda, E)$ that was constructed in \cite{ChJuKaLa} (recall Remark~\ref{genthetarem}) satisfies the same special skein relation as $\Theta_d$, something that wasn't possible with the original presentation of $\mathcal{E}_n(u)$. Recall that the invariant $\Theta(q,\lambda, E)$  generalizes the classical link invariants $\Theta_d(q, \lambda_D)$ as well as the Homflypt polynomial \cite{ChJuKaLa, kaula}. 
 
 Similarly,  our interest in the algebra ${\rm PTL}_n(q)$ lies in the fact that it is related to the 2-variable invariant $\theta(\lambda, E)$, which is constructed in \cite{gola} and satisfies the same special skein relation as the invariants $\theta_d$. Analogously, the invariant $\theta(q, E)$ generalizes the invariants $\theta_d$ as well as the Jones polynomial.
 \end{remark}

\bibliography{bibliography.bib}{}
\bibliographystyle{siam}
\end{document}